\newcommand{\inprod}[2]{\ensuremath{\langle #1 , \, #2 \rangle}}
\newtheorem{theorem}{Theorem} 
\newtheorem{lemma}{Lemma}
\newtheorem{corollary}{Corollary}
\newcommand{\abscont}{\mathcal{P}_{\mathrm{ac}}(\Omega)}
\newcommand{\abscontmom}{\mathcal{P}_{2,\mathrm{ac}}(\Omega)}
\newcommand{\semidualpq}{\mathcal{S}_{P,Q}}
\newcommand{\semidualphatqhat}{\mathcal{S}_{\widehat{P},\widehat{Q}}}
\newcommand{\semidualphatq}{\mathcal{S}_{\widehat{P},Q}}
\newcommand{\semidualpqhat}{\mathcal{S}_{P,\widehat{Q}}}
\newcommand{\probspace}{\mathcal{P}_2(\Omega)}
\newcommand{\probspacesquared}{\mathcal{P}_2(\Omega \times \Omega)}
\newcommand{\probone}{P}
\newcommand{\probtwo}{Q}
\newcommand{\nn}{\text{nn}}
\begin{document}

\begin{center} {\LARGE{\bf{
Stability Bounds for Smooth Optimal Transport Maps \\ \vspace{.2cm} 
 and 
their Statistical Implications}}}
\\

\vspace*{.3in}

{\large{
\begin{tabular}{ccccc}
Sivaraman Balakrishnan$^{\dagger}$ and Tudor Manole$^{\diamond}$\\
\end{tabular}

\vspace*{.1in}

\begin{tabular}{ccc}
$^{\dagger}$\,Department of Statistics and Data Science \\
Machine Learning Department \\
\end{tabular}

\begin{tabular}{c}
Carnegie Mellon University \\
\texttt{siva@stat.cmu.edu}
\end{tabular}

\vspace{0.2in}

\begin{tabular}{ccc}
$^{\diamond}$\,Statistics and Data Science Center \\
Massachusetts Institute of Technology\\
\texttt{tmanole@mit.edu}
\end{tabular}

\vspace*{.2in}
}}

\vspace*{.2in}

\today
\vspace*{.2in}

\begin{abstract}
\noindent

\noindent We study estimators of the optimal transport (OT) map between two probability distributions. We focus on plugin estimators derived from the OT map between estimates of the underlying distributions. 
We develop novel stability bounds for OT maps which generalize those in past work, and allow us to reduce the problem of optimally estimating the transport map to that of optimally estimating densities in the Wasserstein distance. 
In contrast, past work provided a partial connection between these problems and relied on regularity theory for the Monge-Amp\`ere equation to bridge the gap, a step which required unnatural assumptions to obtain sharp guarantees. 
We also provide some new insights
into the connections between stability bounds which arise in the analysis of plugin estimators and 
growth bounds for the semi-dual functional which arise in the analysis of Brenier potential-based estimators of the transport map. 
We illustrate the applicability of our new stability bounds by revisiting the smooth setting
studied by \citet{manole2024plugin}, analyzing two of their estimators under more general conditions.
Critically, our bounds do not require smoothness or boundedness assumptions on the underlying measures. As an illustrative application, we develop and analyze a novel tuning parameter-free estimator for the OT map between two strongly log-concave distributions.
\end{abstract}
\end{center}

\section{Introduction}
The field of optimal transport (OT) is centered on the following question: Given two probability measures $\probone$ and $\probtwo$ supported in $\mathbb{R}^d$, how can we transport $\probone$ to $\probtwo$ while minimizing the transportation cost? For the squared Euclidean distance cost, an OT map $T_0$ from $\probone$ to $\probtwo$
is any solution to the {\it Monge problem}~\citep{monge1781},
\begin{equation}
\label{eqn:monge}
\argmin_{T:T_\# P = Q} \int \norm{x-T(x)}_2^2 dP(x),
\end{equation}
where the minimizer is chosen from transport maps between $P$ and $Q$, 
that is, the set of Borel-measurable functions $T:\bbR^d \to \bbR^d$
such that $T_{\#} P := P(T^{-1}(\cdot)) = Q$. Intuitively, a transport map $T$ ``transports'' a random variable $X$ with distribution $P$ to a random variable $T(X)$ which has distribution $Q$.

One of the central preoccupations of the field of \emph{statistical} optimal transport \citep{chewi2024} is to understand
the estimation of objects which arise from the OT framework\textemdash for instance, the optimal transport map $T_0$ or
the optimal transport cost\textemdash when the two measures are unknown, but we have samples from them.
Our focus in this paper is on estimating the OT map from samples. Concretely, given i.i.d. samples $X_1,\ldots, X_n \sim \probone$ and $Y_1,\ldots, Y_m \sim \probtwo$ our goal is to construct an estimate $\widehat{T}_{nm}$ of the OT map $T_0$ between $\probone$ and $\probtwo$, for which the~risk 
\begin{align}
\label{eqn:risk}
R(\widehat{T}_{nm}, T_0) := \mathbb{E} \left[\int \|\widehat{T}_{nm}(x) - T_0(x)\|_2^2 d\probone(x)\right]
\end{align}
is small. Here, the outer expectation is taken over the randomness of the two samples. Two of the broad classes of estimators that have been studied in the literature are \emph{plugin estimators}, which construct estimates of~$T_0$ based on the optimal transport 
map between estimates~$\widehat{\probone}$ and~$\widehat{\probtwo}$ of the underlying distribution \citep{chernozhukov2017,gunsilius2021,ghosal2022,deb2021a,manole2024plugin}, and \emph{dual estimators}, which are based on a characterization (see Theorems~\ref{thm:brenier} and~\ref{thm:santambrogio}) of the optimal transport map as the gradient of a convex function $\varphi_0$ which solves the so-called \emph{semi-dual} optimization problem:
    \begin{align}
    \label{eqn:semidual}
        \varphi_0 \in  \argmin_{\varphi \in L^1(P)} \int \varphi d\probone + \int \varphi^*dQ.
    \end{align}
We refer to the quantity, 
\begin{align}
    \label{eqn:semi-dual-functional}
    \mathcal{S}_{P,Q}(\varphi) := \int \varphi d\probone + \int \varphi^* dQ
\end{align}
as the \emph{semi-dual functional}.
Dual estimators approximate the solution to the program~\eqref{eqn:semidual} building on standard ideas from the M-estimation literature \citep{vandervaart1996,vandegeer2000}, and then compute an estimate $\widehat{T} = \nabla \widehat{\varphi}$ \citep{hutter2021,divol2022a,ding2024}.
In contrast to plugin estimators, the computation
of dual estimators is often impractical because computing the convex conjugate $\varphi^*$ in the program above can be challenging. On the other hand, one can directly bring to bear powerful ideas from the analysis of M-estimation to sharply characterize the statistical performance of dual estimators and as a result they serve as a useful information-theoretic
benchmark.

Central to the study of statistical properties of plugin estimators are \emph{stability bounds} which aim to quantify the distance between the optimal transport map 
between $\widehat{\probone}$ and $\widehat{\probtwo}$ and the one between $\probone$ and $\probtwo$, i.e. they quantify the \emph{stability} of the optimal transport map between two distributions to perturbations of the distributions~\citep{deb2021,manole2024plugin}. The study of dual estimators has on the other hand mirrored the study of $M$-estimators. As highlighted by the work of \cite{hutter2021}, a key quantity in analyzing dual estimators is the so-called \emph{growth rate} of the semi-dual functional, i.e. the rate at which the difference $\mathcal{S}_{P,Q}(\varphi) - \mathcal{S}_{P,Q}(\varphi_0)$ grows as a function of the distance between $\varphi$ and $\varphi_0$. Concretely, \citet{hutter2021} showed that under a certain curvature assumption on~$\varphi_0$, the growth rate is quadratic in the following sense:
\begin{align}
\label{eqn:dualstability}
    \mathcal{S}_{P,Q}(\varphi) - \mathcal{S}_{P,Q}(\varphi_0) \asymp \int \|\nabla \varphi(x) - \nabla \varphi_0(x)\|_2^2 d\probone(x).
\end{align}
This quadratic growth, together with empirical process techniques from the analysis of M-estimators, has led to sharp statistical bounds for dual estimators \citep{hutter2021,divol2022a}.
In our work, we shed some light on the connections between the growth rate of the semi-dual functional and stability bounds by showing that many existing and new stability bounds 
can be derived by studying the growth rate of appropriate semi-dual functionals.

\vspace{.3cm}

\noindent {\bf Our Contributions: } Our primary contribution (Theorem~\ref{thm:stability}) is an
improvement to the stability bounds of \citet{manole2024plugin}. We recover
their one-sample stability bound and improved versions of their two-sample and empirical stability 
bounds as direct consequences of a single 
unified result. This new stability bound 
has immediate implications for the analysis of plugin estimators of the OT map. As an important example,
we obtain new results for the estimation of the optimal transport map between distributions with smooth densities, in Section~\ref{sec:smooth}. Similar results were previously obtained
by \citet{manole2024plugin} \emph{conditional} on a uniform boundary regularity assumption holding. In order to 
obtain unconditional results, the authors made the unnatural assumption that the sampling distributions were supported on the $d$-dimensional flat torus. Using our new stability bound we provide an unconditional result with a much simpler proof, completely bypassing the regularity issues. We provide another illustration of the generality of our stability bounds by using them to provide guarantees for a new, practical estimator of the optimal transport map between two log-smooth and strongly log-concave distributions.

\vspace{.5cm}

\noindent {\bf Organization: } We begin with some background on optimal transport and introduce the semi-dual functional
formally in Section~\ref{sec:background}. We state our new stability bound in Section~\ref{sec:curv} and discuss how it relates to primal and dual stability bounds developed in past work. In Section~\ref{sec:smooth} we 
develop consequences of our stability bound in estimating the optimal transport map between two smooth distributions, highlighting how we are able to close some important gaps in this literature. In Section~\ref{sec:logconcave} we consider the problem of estimating the optimal
transport map between two strongly smooth and strongly log-concave distributions. We conclude in Section~\ref{sec:discussion}.

\vspace{.5cm}

\noindent {\bf Notation: } For a domain $\Omega \subseteq \mathbb{R}^d$,
we let $\calP(\Omega)$ denote the set
of Borel probability measures supported
on $\Omega$, and $\probspace$ be the subset of 
such probability measures with finite second moment. 
Similarly, let $\calP_{\mathrm{ac}}(\Omega)$
(resp.\,$\calP_{2,\mathrm{ac}}(\Omega)$)
denote the set of probability
measures in $\calP(\Omega)$ (resp.\,$\calP_2(\Omega)$)
which are absolutely continuous with respect
to the Lebesgue measure.
We denote by $\calC^s(\Omega)$
the standard H\"older space with
real and positive exponent $s > 0$
over~$\Omega$~\citep{gilbarg2001},
which is sometimes
denoted $\calC^{\lfloor s \rfloor,s-\lfloor s \rfloor}(\Omega)$.
Given a vector field $T:\bbR^d\to\bbR^d$
and a measure $\mu$ on 
$\bbR^d$, we write
$\|T\|_{L^2(\mu)}^2 = 
\int_{\bbR^d} \|T(x)\|_2^2 d\mu(x)$. 
For a pair of distributions $P,Q \in \calP(\bbR^d)$, we denote by $\varphi_{P,Q}$ a \emph{Brenier potential}, i.e. \emph{any} potential which solves the semi-dual problem~\eqref{eqn:semidual}. For a convex function $\varphi$ defined on $\Omega$, we denote by $\varphi^*$ its Fenchel conjugate. In general, $\varphi^*$ takes values in the extended reals and is defined by $\varphi^*(y) := \sup_{x \in \Omega} \left[\inprod{x}{y} - \varphi(x)\right].$ For sequences 
$(a_n)_{n=1}^\infty$ and $(b_n)_{n=1}^\infty$, we write $a_n \lesssim b_n$ if there exists
$C > 0$ such that $a_n \leq C b_n$ for all $n \geq 1$. Following the convention in nonparametric statistics, the constant $C$ above can depend on the dimension~$d$, and other problem parameters when explicitly mentioned, but is otherwise universal.

\section{Background}
\label{sec:background}

We work with the following statistical setup: We obtain i.i.d.~samples $X_1,\ldots,X_n \sim P$ and $Y_1,\ldots, Y_m \sim Q$, where $P,Q \in \probspace$ and the distributions $P,Q$ belong to some structured collection of distributions $\mathcal{R}$.  In Section~\ref{sec:smooth} our focus will be on the setting where $\mathcal{R}$ is a collection of 
distributions with H\"{o}lder smooth densities, while in Section~\ref{sec:logconcave} we will consider the case where $\mathcal{R}$ is the collection of strongly log-concave and log-smooth distributions. 
Our goal is to construct an estimator $\widehat{T}_{nm}$ such that the risk in~\eqref{eqn:risk} is uniformly small over the collection $\mathcal{R}$, i.e. we are interested in constructing estimators for which the maximum risk:
\begin{align*}
\sup_{P,Q \in \mathcal{R}} R(\widehat{T}_{nm}, T_0) 
\end{align*}
is small. 

The Monge problem in~\eqref{eqn:monge} does not always have a solution, and the push-forward constraint is nonlinear in general. These aspects motivate the study of the Kantorovich relaxation \citep{kantorovich1948,kantorovich1942}
of identifying an \emph{optimal coupling} between $P$ and $Q$, defined~as: 
\begin{align}
\label{eqn:kantorovich}
\pi_0 \in \argmin_{\pi \in \Pi(P,Q)} \int \|x - y\|_2^2 d\pi(x,y),
\end{align}
where the set $\Pi(P,Q)$ denotes the set of couplings of $P$ and $Q$, i.e.
\begin{align*}
\Pi(P,Q) = \{ \pi \in \probspacesquared: \pi( \cdot \times \Omega) = P, \pi( \Omega \times \cdot) = Q \}.
\end{align*}
In contrast to the Monge problem~\eqref{eqn:monge}, the Kantorovich problem~\eqref{eqn:kantorovich} is always feasible, and is a linear program. From~\eqref{eqn:kantorovich} we can derive the following intuitive optimality property: for \emph{any} pair
of random variables $U$ with distribution $P$, and $V$ with distribution $Q$ we have that:
\begin{align}
\label{eqn:maxcorr}
\mathbb{E}_{(X,Y) \sim \pi_0} \inprod{X}{Y}  \geq \mathbb{E} \inprod{U}{V},
\end{align}
i.e. the optimal coupling, among all pairs with given marginals, defines a pair of random variables with maximum correlation.

\subsection{Brenier's Theorem and Alternatives}
The Kantorovich problem is a relaxation of Monge's problem---every transport map between distributions defines a coupling of the distributions. In contrast to Monge's problem, the Kantorovich relaxation is always feasible, and is a linear program.
A cornerstone result in OT theory, is Brenier's theorem which relates the solutions of the Monge and Kantorovich problems:
\begin{theorem}[Brenier's Theorem]
\label{thm:brenier}
Let $P \in \abscontmom$ and $Q \in \probspace$. 
Then, the following assertions hold.
\begin{enumerate}
\item  
There exists an optimal transport map $T_0$ 
pushing $P$ forward onto $Q$
which takes the form $T_0 = \nabla\varphi_0$ for a convex function $\varphi_0: \mathbb{R}^d \to \mathbb{R}$
which solves the semi-dual problem~\eqref{eqn:semidual}. Furthermore, $T_0$
  is uniquely determined $P$-almost everywhere.  
\item If we further have $Q \in \abscontmom$, then $S_0 := \nabla \varphi_0^*$ is 
the $Q$-almost everywhere uniquely
determined optimal transport map pushing $Q$ forward onto $P$. Furthermore, for Lebesgue-almost every $x, y \in \Omega$, 
$$\nabla\varphi_0^* \circ \nabla \varphi_0(x) = x, \quad \nabla\varphi_0\circ\nabla\varphi_0^*(y) = y.$$
\end{enumerate}
\end{theorem}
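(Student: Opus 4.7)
The plan is to prove Brenier's theorem by the standard route: pass from the Monge problem to the Kantorovich relaxation, dualize to obtain the semi-dual problem, and then use convex-analytic differentiability to extract a Monge solution from an optimal coupling.

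First, I would establish existence of an optimal Kantorovich coupling $\pi_0$ by noting that $\Pi(P,Q)$ is tight and weakly compact, while the cost $(x,y) \mapsto \|x-y\|_2^2$ is lower semi-continuous. Expanding $\|x-y\|_2^2 = \|x\|_2^2 + \|y\|_2^2 - 2\inprod{x}{y}$ and using that the marginals have finite second moments (so $\int \|x\|_2^2 dP$ and $\int \|y\|_2^2 dQ$ are fixed), the Kantorovich problem is equivalent to maximizing $\mathbb{E}_\pi \inprod{X}{Y}$ as in \eqref{eqn:maxcorr}. The Kantorovich duality then produces a pair of potentials $(\varphi,\psi)$ with $\varphi(x)+\psi(y)\geq \inprod{x}{y}$. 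Performing the standard $c$-concave envelope construction (here, replace $\psi$ by the Legendre transform $\varphi^*$ and iterate) yields an optimizer $\varphi_0$ of the semi-dual functional \eqref{eqn:semidual} which is convex and lower semi-continuous, together with the Fenchel identity $\varphi_0(x)+\varphi_0^*(y)=\inprod{x}{y}$ for $\pi_0$-almost every $(x,y)$. Equivalently, $y\in\partial\varphi_0(x)$ for $\pi_0$-a.e. $(x,y)$.

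Next comes the key differentiability step. Since $\varphi_0$ is convex, it is locally Lipschitz on the interior of its domain, and by Rademacher's theorem (or the classical Alexandrov result) it is differentiable Lebesgue-almost everywhere. Because $P\in\abscontmom$, $\varphi_0$ is differentiable $P$-almost everywhere, and the subdifferential there reduces to the singleton $\{\nabla\varphi_0(x)\}$. Combined with $y\in\partial\varphi_0(x)$ $\pi_0$-a.s., this forces $\pi_0$ to be concentrated on the graph of $T_0:=\nabla\varphi_0$; in particular $T_0{}_{\#}P=Q$, and $T_0$ solves the Monge problem. Uniqueness $P$-a.e. follows because any other Brenier potential must share the same gradient wherever both are differentiable, and this occurs on a set of full $P$-measure.

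For the second assertion, I would apply the same argument with the roles of $P$ and $Q$ interchanged: the map $S_0:=\nabla\psi_0$ solves the reversed Monge problem for some convex $\psi_0$. The symmetry of the Fenchel identity under swapping coordinates, together with the fact that $\varphi_0^*$ is now differentiable $Q$-a.e. (since $Q\in\abscontmom$ and $\varphi_0^*$ is convex), identifies $\psi_0$ with $\varphi_0^*$ up to $Q$-null sets. The pointwise inversion formulas $\nabla\varphi_0^*\circ\nabla\varphi_0(x)=x$ and $\nabla\varphi_0\circ\nabla\varphi_0^*(y)=y$ are then consequences of the general convex-analytic fact that, at any point of differentiability of $\varphi_0$ where $\varphi_0^*$ is also differentiable at $\nabla\varphi_0(x)$, the subgradient correspondence $y\in\partial\varphi_0(x)\iff x\in\partial\varphi_0^*(y)$ collapses to an actual inverse.

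The main obstacle is the Kantorovich duality / $c$-concave reduction step, where one must justify that the supremum in the dual can be attained by a pair $(\varphi_0,\varphi_0^*)$ with $\varphi_0$ convex and lower semi-continuous, and that complementary slackness (the Fenchel identity on the support of $\pi_0$) holds; this is where measure-theoretic care and the quadratic structure of the cost are essential. Once that is in hand, the rest is a relatively mechanical application of convex analysis and Rademacher's theorem under the absolute continuity hypotheses on $P$ and $Q$.
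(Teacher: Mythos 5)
The paper does not prove this statement: Brenier's theorem is quoted as classical background (Brenier/McCann; cf.\ the references to \citet{villani2003} and \citet{santambrogio2015}), so there is no internal proof to compare against. Your outline is the standard textbook argument---tightness and lower semicontinuity for existence of an optimal Kantorovich plan, Kantorovich duality with the double-Legendre (``$c$-concave envelope'') reduction to a convex potential, the Fenchel identity $\varphi_0(x)+\varphi_0^*(y)=\inprod{x}{y}$ on the support of $\pi_0$, and Rademacher-type a.e.\ differentiability of convex functions combined with $P\in\abscontmom$ to collapse $\partial\varphi_0(x)$ to $\{\nabla\varphi_0(x)\}$---and it is correct in outline.

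Two places where the sketch is thinner than it should be. First, you correctly flag dual attainment as the main obstacle, but note it also carries integrability bookkeeping: one must check $\varphi_0\in L^1(P)$ and $\varphi_0^*\in L^1(Q)$ (finite second moments are what make this work for the quadratic cost), and that $\varphi_0$ can be taken finite and convex on all of $\mathbb{R}^d$. Second, your uniqueness argument (``any other Brenier potential must share the same gradient wherever both are differentiable'') is not yet a proof; two distinct potentials agreeing in gradient $P$-a.e.\ is precisely what needs showing. The cleaner route is via strong duality: \emph{every} optimal plan $\pi$ attains the same optimal value as the fixed dual optimizer $\varphi_0$, hence satisfies the Fenchel identity with that same $\varphi_0$ and is concentrated on the graph of $\nabla\varphi_0$; this simultaneously gives uniqueness of the optimal coupling and $P$-a.e.\ uniqueness of $T_0$ (a mixture argument with two candidate optimal maps also works). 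For part 2, the same observation spares you the auxiliary potential $\psi_0$: the Fenchel identity is symmetric, so when $Q$ is also absolutely continuous the same plan is concentrated on the graph of $\nabla\varphi_0^*$ read in the other direction, and the inversion formulas follow $P$-a.e.\ and $Q$-a.e.\ from $y=\nabla\varphi_0(x)$, $x=\nabla\varphi_0^*(y)$ holding $\pi_0$-a.s.; upgrading these to Lebesgue-a.e.\ statements on $\Omega$ uses the absolute continuity of both marginals.
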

\noindent Our stability bounds
also hold in some cases where Brenier's theorem does not  apply. Indeed, we will generally
only require the measures $P$ and $Q$ to be chosen
such that the following assumption holds:
\begin{enumerate}[leftmargin=1.65cm,listparindent=-\leftmargin,label=\textbf{A0}]
\item  \label{ass:brenier_regularity} There exists a strictly convex and differentiable function $\varphi_0:\Omega\to\bbR$, such that $\varphi_0 \vert_{\text{supp}(P)}$ solves~\eqref{eqn:semidual}, and $T_0:=\nabla\varphi_0$ is a vector field from $\Omega$ into itself.
\end{enumerate}
\noindent 
With a slight abuse of terminology, 
we will refer to the map
$\varphi_0:\Omega\to\bbR$
in condition~\ref{ass:brenier_regularity}
as a Brenier potential.
By Brenier's theorem, this condition is  
satisfied whenever $P\in \calP_{2,\mathrm{ac}}(\Omega)$.
However, it is also satisfied 
when $P$ is an arbitrary measure, 
provided that $Q$ is the pushforward of $P$
under a map $\nabla\varphi_0:\Omega\to\Omega$,
with $\varphi_0$ a strictly convex and differentiable
function.
The following result (which can be deduced from Proposition 1.15 of~\cite{santambrogio2015})
characterizes the optimal
transport problem between $P$ and $Q$ when
assumption~\ref{ass:brenier_regularity} holds:
\begin{theorem}
\label{thm:santambrogio}
Let $P,Q \in \probspace$, and suppose that~\ref{ass:brenier_regularity} holds.
Then, there exists a unique optimal transport
coupling between $P$ and $Q$, 
which is induced by the optimal transport maps~$T_0 := \nabla\varphi_0$ between $P$ and $Q$, 
and $S_0 := \nabla \varphi_0^*$ between $Q$ and $P$.
\end{theorem}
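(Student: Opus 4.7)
The plan is to combine Kantorovich duality with the differentiability of $\varphi_0$ to show that every optimal coupling is concentrated on the graph of $\nabla\varphi_0$. The starting point is the standard reformulation: since $P,Q\in\probspace$, for every coupling $\pi\in\Pi(P,Q)$ one has
\[
\int \|x-y\|_2^2\, d\pi(x,y) = \int \|x\|_2^2\, dP + \int \|y\|_2^2\, dQ - 2\int \langle x,y\rangle\, d\pi(x,y),
\]
so minimizing the quadratic cost is equivalent to maximizing $\int \langle x,y\rangle\, d\pi$. By Kantorovich duality, the dual of this concave maximization is
$\inf_{\varphi}\bigl[\int \varphi\, dP + \int \varphi^*\, dQ\bigr]$
over convex functions $\varphi$, which (up to additive constants that cancel) is precisely the semi-dual functional~\eqref{eqn:semi-dual-functional}. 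Under~\ref{ass:brenier_regularity}, the assumed $\varphi_0$ is a feasible dual optimizer.

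The core step is a complementary slackness argument. For any primal-feasible $\pi$ and any convex $\varphi$, Fenchel's inequality gives $\varphi(x)+\varphi^*(y)\geq \langle x,y\rangle$ pointwise. Integrating and using that $\varphi_0$ attains the dual infimum while an optimal $\pi_0$ attains the primal supremum forces equality $\pi_0$-a.s., i.e. $\pi_0$ is concentrated on $\{(x,y)\in\Omega\times\Omega : y\in \partial\varphi_0(x)\}$. Here the existence of an optimal $\pi_0$ follows from weak compactness of $\Pi(P,Q)$ (tightness plus Prokhorov) together with lower semicontinuity of the cost. Because $\varphi_0$ is strictly convex and differentiable on $\Omega$, its subdifferential at each point collapses to the singleton $\{\nabla\varphi_0(x)\}$. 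Hence $\pi_0$ is supported on the graph of $T_0=\nabla\varphi_0$, which forces $\pi_0 = (\mathrm{id},T_0)_{\#}P$ and, as a byproduct, $T_{0\#}P=Q$. Since this identification holds for \emph{every} optimal coupling, uniqueness follows.

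For the reverse direction, note that strict convexity and differentiability of $\varphi_0$ imply that $\varphi_0^*$ is strictly convex and differentiable on $T_0(\Omega)$, with $\nabla\varphi_0^*\circ\nabla\varphi_0=\mathrm{id}$ in the appropriate almost-everywhere sense. The same Kantorovich problem viewed with the roles of $P$ and $Q$ swapped admits $\varphi_0^*$ as a dual optimizer; the argument above then yields that $S_0=\nabla\varphi_0^*$ is the unique optimal transport map from $Q$ to $P$, and the induced coupling is the reflection of $\pi_0$. The step I expect to be trickiest is the clean application of Kantorovich duality and existence of a dual optimizer under the weak hypothesis~\ref{ass:brenier_regularity}, which does not impose absolute continuity of $P$; once strong duality is in force, the strict convexity of $\varphi_0$ does all the remaining work by collapsing the subdifferential to a single-valued map.
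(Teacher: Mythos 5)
Your proof is correct in substance, and it is essentially the argument that the paper never writes out: the paper's ``proof'' of Theorem~\ref{thm:santambrogio} is just the citation to Proposition 1.15 of \citet{santambrogio2015}, whose content is precisely the duality/complementary-slackness mechanism you deploy. Your three ingredients are all sound: existence of an optimal $\pi_0$ by tightness and lower semicontinuity; absence of a duality gap for the quadratic cost on $\calP_2$, which requires no absolute continuity of $P$ (the step you flagged as trickiest is in fact harmless, since dual attainment is not something you must prove\textemdash it is assumed outright in~\ref{ass:brenier_regularity}); and the conclusion that every optimal plan is concentrated on the contact set $\{(x,y):\varphi_0(x)+\varphi_0^*(y)=\inprod{x}{y}\}$, which then collapses to a graph and gives uniqueness. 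So the route is the standard one, made self-contained.

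One step is glossed, and it is the only soft spot: at a point $x\in\partial\Omega$ (when $\Omega$ is not open), equality in Fenchel--Young with $\varphi_0^*(y)=\sup_{x'\in\Omega}[\inprod{x'}{y}-\varphi_0(x')]$ only forces $y-\nabla\varphi_0(x)$ to lie in the normal cone to $\Omega$ at $x$, not $y=\nabla\varphi_0(x)$; differentiability does not make the subdifferential \emph{relative to} $\Omega$ a singleton at boundary points (take $\Omega=[0,1]$, $\varphi_0(x)=x^2/4$: every $y\in[1/2,1]$ is a contact point of $x=1$). What is airtight is the other side of the contact relation: by strict convexity of $\varphi_0$ the maximizer defining $\varphi_0^*(y)$ is unique, so contact forces $x=\nabla\varphi_0^*(y)$, which already identifies every optimal plan as $(\nabla\varphi_0^*,\mathrm{id})_\# Q$ and hence gives uniqueness and the claim about $S_0$. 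The further identification with $(\mathrm{id},\nabla\varphi_0)_\# P$ needs $P$ to give no mass to boundary contact points\textemdash automatic when $\Omega$ is open or $\Omega=\bbR^d$, or when $P$ is absolutely continuous, which covers every use in the paper. This implicit convention is inherited from the theorem statement itself rather than introduced by your argument, so I would simply make it explicit rather than rely on ``the subdifferential collapses at each point of $\Omega$.''
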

\noindent Theorems~\ref{thm:brenier} and~\ref{thm:santambrogio} provide sufficient conditions for the existence of a unique OT map  between $P$ and $Q$, and connects this unique map
to the solution of the semi-dual program~\eqref{eqn:semidual}. These results are important as they allow us to study Monge's problem via its linear programming relaxation. This in turn allows us to bring tools from convex analysis and convex duality to bear on the OT problem.
The dual perspective will play a key role in the development of stability bounds. 

\subsection{Wasserstein Distance and the Semi-Dual Functional}
The optimal value of the Kantorovich problem in~\eqref{eqn:kantorovich} defines a metric on distributions known as the 2-Wasserstein distance:
\begin{align*}
W_2^2(P,Q) := \inf_{\pi \in \Pi(P,Q)} \int \|x - y\|_2^2 d\pi(x,y).
\end{align*}
The Wasserstein distance has a dual characterization via the identity:
\begin{align*}
W_2^2(\probone, \probtwo) = \int \| \cdot\|_2^2 dP + \int \| \cdot\|_2^2 dQ - 2 \inf_{\varphi \in L^1(P)} \semidualpq(\varphi),
\end{align*}
where the semi-dual functional $\semidualpq$ is defined in~\eqref{eqn:semi-dual-functional}. The semi-dual optimization problem~\eqref{eqn:semidual} can be seen as a dual optimization problem to Kantorovich's linear program~\citep{villani2003}. 

The semi-dual functional is central in our stability bounds.
Of particular interest to us will be a characterization of the semi-dual growth function as an integrated Bregman divergence between transport maps (see Appendix~\ref{app:bregmanproof})~\citep{muzellec2021,vacher2021convex}:
\begin{lemma}
    \label{lem:bregman}
    Suppose that $P,Q,\widetilde{P}, \widetilde{Q} \in \probspace$ and that~\ref{ass:brenier_regularity} holds. 
    Let $T_0 := \nabla \varphi_0$, let $\widetilde{\pi}$ denote an optimal coupling of $\widetilde{P}$ and $\widetilde{Q}$ and let $\widetilde{\varphi}$ denote a Brenier potential between $\widetilde{P}$ and $\widetilde{Q}$. Then:
    \begin{align}
    \label{eqn:bregman}
        \mathcal{S}_{\widetilde{P},\widetilde{Q}}(\varphi_0) - \mathcal{S}_{\widetilde{P},\widetilde{Q}}(\widetilde{\varphi}) = \int \left[ \varphi^*_{0}(y) - 
\varphi^*_0(T_0(x)) - \inprod{\nabla \varphi_0^*(T_0(x))}{y - T_0(x)}\right] d \widetilde{\pi}(x,y). 
    \end{align}
\end{lemma}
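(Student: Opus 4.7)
The plan is to express both $\calS_{\widetilde P,\widetilde Q}(\varphi_0)$ and $\calS_{\widetilde P,\widetilde Q}(\widetilde\varphi)$ as integrals against $\widetilde\pi$, subtract, and then recognize the resulting integrand as the Bregman divergence of $\varphi_0^*$ at $T_0(x)$ evaluated at $y$.

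First I would rewrite $\calS_{\widetilde P,\widetilde Q}(\widetilde\varphi)$. Because $\widetilde\pi\in\Pi(\widetilde P,\widetilde Q)$ has the prescribed marginals, integrating $\widetilde\varphi(x)+\widetilde\varphi^*(y)$ against $\widetilde\pi$ reproduces $\calS_{\widetilde P,\widetilde Q}(\widetilde\varphi)$. Since $\widetilde\varphi$ minimizes $\calS_{\widetilde P,\widetilde Q}$ and $\widetilde\pi$ is an optimal coupling, Kantorovich duality gives
\[
\calS_{\widetilde P,\widetilde Q}(\widetilde\varphi) \;=\; \tfrac{1}{2}\!\int\!\|x\|_2^2\,d\widetilde P(x) + \tfrac{1}{2}\!\int\!\|y\|_2^2\,d\widetilde Q(y) - \tfrac{1}{2} W_2^2(\widetilde P,\widetilde Q) \;=\; \int \inprod{x}{y}\,d\widetilde\pi(x,y),
\]
where the last equality expands $W_2^2$ against $\widetilde\pi$ and uses the marginal constraints. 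Equivalently, one can argue pointwise by Fenchel--Young: $\widetilde\varphi(x)+\widetilde\varphi^*(y)\geq \inprod{x}{y}$ with equality on $\mathrm{supp}(\widetilde\pi)$, which again collapses the integral to $\int\inprod{x}{y}\,d\widetilde\pi$.

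Next, I would write $\calS_{\widetilde P,\widetilde Q}(\varphi_0) = \int[\varphi_0(x)+\varphi_0^*(y)]\,d\widetilde\pi(x,y)$ by the same marginal argument, so that
\[
\calS_{\widetilde P,\widetilde Q}(\varphi_0) - \calS_{\widetilde P,\widetilde Q}(\widetilde\varphi) \;=\; \int\bigl[\varphi_0(x) + \varphi_0^*(y) - \inprod{x}{y}\bigr]\,d\widetilde\pi(x,y).
\]
The final step is to recast the integrand using $T_0 = \nabla\varphi_0$. Under assumption \ref{ass:brenier_regularity}, $\varphi_0$ is strictly convex and differentiable on $\Omega$, so Fenchel--Young holds with equality at the conjugate point $T_0(x)$:
\[
\varphi_0(x) + \varphi_0^*(T_0(x)) = \inprod{x}{T_0(x)}, \qquad \nabla\varphi_0^*(T_0(x)) = x,
\]
the second identity being a standard consequence of strict convexity and differentiability of $\varphi_0$ (Legendre duality). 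Substituting the first identity to eliminate $\varphi_0(x)$ and then the second to replace $x$ by $\nabla\varphi_0^*(T_0(x))$ in the inner product, the integrand becomes exactly
\[
\varphi_0^*(y) - \varphi_0^*(T_0(x)) - \inprod{\nabla\varphi_0^*(T_0(x))}{y - T_0(x)},
\]
yielding \eqref{eqn:bregman}.

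The only place requiring care is the chain of identities in the last step: one must ensure that $\nabla\varphi_0^*$ exists at points of the form $T_0(x)$ and equals $x$ there. Since \ref{ass:brenier_regularity} supplies strict convexity and differentiability of $\varphi_0$ on $\Omega$, this follows from standard convex-analytic duality, and the rest of the argument is an application of Fubini and bookkeeping with the marginals of $\widetilde\pi$.
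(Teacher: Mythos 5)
Your proposal is correct and follows essentially the same route as the paper: integrate both semi-dual values against $\widetilde\pi$ using the marginal constraints, collapse $\mathcal{S}_{\widetilde P,\widetilde Q}(\widetilde\varphi)$ to $\int\inprod{x}{y}\,d\widetilde\pi$ via the Fenchel--Young equality on the support of the optimal coupling, and then apply the Fenchel--Young equality for $\varphi_0$ at $T_0(x)=\nabla\varphi_0(x)$ together with $\nabla\varphi_0^*(T_0(x))=x$ to obtain the Bregman form. Your write-up is in fact slightly more explicit than the paper's about the final substitution $x=\nabla\varphi_0^*(T_0(x))$, which the paper leaves implicit by invoking the conjugate-gradient identities recorded in its appendix.
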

\noindent In cases when the coupling $\widetilde{\pi}$ is realized by a transport map $\widetilde{T}$ the expression~\eqref{eqn:bregman} simplifies to:
\begin{align*}
    \mathcal{S}_{\widetilde{P},\widetilde{Q}}(\varphi_0) - \mathcal{S}_{\widetilde{P},\widetilde{Q}}(\widetilde{\varphi}) = \int \left[ \varphi^*_{0}(\widetilde{T}(x)) - 
\varphi^*_0(T_0(x)) - \inprod{\nabla \varphi_0^*(T_0(x))}{\widetilde{T}(x) - T_0(x)}\right] d\widetilde{P}(x).
\end{align*}
Intuitively, this result shows that the growth of the semi-dual functional away from its minimizer effectively measures a ``distance'' between two transport maps, which in turn gives further insight into the crucial role played by the semi-dual functional in the estimation of transport maps.

\subsection{Canonical Estimators of the OT Map}
Having setup the necessary background on optimal transport we now describe the two classes of estimators that we study:
\begin{enumerate}
\item {\bf Plugin Estimators:} We consider two types of plugin estimators. \emph{Smooth estimators} of the OT map are obtained by first constructing estimates $\widehat{\probone}_n$ and $\widehat{\probtwo}_m$ of the measures $\probone$ and $\probtwo$, and then defining $\widehat{T}_{nm}$ to be the optimal transport map between $\widehat{\probone}_n$ and $\widehat{\probtwo}_m$. For $\hat T_{nm}$ to be well-defined, the estimators
$\hat P_n$ and $\hat Q_m$ need to be {\it proper}, in the sense that they define probability measures
in their own right, and are related by an optimal transport map (which is, for instance, the case if $\widehat P_n$ is absolutely continuous).

    \emph{Empirical estimators} are similar in spirit,  but we first compute an optimal transport coupling between the empirical measures:
\begin{equation} 
\label{eq:empirical_measures}
P_n = \frac 1 n \sum_{i=1}^n \delta_{X_i}, \quad \text{and}\quad Q_m = \frac 1 m \sum_{j=1}^m \delta_{Y_j}.
\end{equation}
    This empirical coupling, which is only defined at the sample points $X_1,\ldots,X_n$, can then be extended to the entire domain using ideas from nonparametric regression \citep{manole2024plugin},
    or using the so-called
    barycentric projection~\citep{deb2021}.
    We refer to empirical and smooth estimators as \emph{plugin} estimators.
    
    \item {\bf Dual Estimators:} An alternative class of estimators is based on the fact that the optimal transport map is the gradient of a convex function~$\varphi_0$ which solves the following \emph{semi-dual} optimization problem:
    \begin{align}
        \varphi_0 := \argmin_{\varphi \in L^1(P)} \int \varphi d\probone + \int \varphi^*dQ,
    \end{align}
    where $\varphi^*$ denotes the convex conjugate of $\varphi$. Given samples from the measures~$\probone$ and~$\probtwo$ a natural class of estimators for $\varphi_0$, is:
    \begin{align}
    \label{eqn:dual}
        \widehat{\varphi} = \argmin_{\varphi \in \Phi} \int \varphi dP_n + \int \varphi^* dQ_m,
    \end{align}
    where $\Phi$ is an appropriate function class.
    Under some regularity conditions, solving the empirical semi-dual optimization problem yields estimators for the transport map $\widehat{T}_{nm} = \nabla \hat{\varphi}$, which we refer to as dual estimators. 
    Dual estimators have been proposed and analyzed in many prior works \citep{hutter2021,divol2022a,vacher2021convex,ding2024}.
\end{enumerate}

\section{Stability Bounds}
\label{sec:curv}

We now present our main stability bound and discuss its consequences. In the analysis of plugin and dual transport map estimators the key technical condition, which endows the semi-dual optimization problem~\eqref{eqn:semidual} with a desirable growth property (see~\eqref{eqn:dualstability}), is that there exists a
Brenier potential $\varphi_0:\Omega\to\bbR$ from $P$ to $Q$ in the sense of condition~\ref{ass:brenier_regularity},
which is additionally smooth and strongly convex:
\begin{enumerate}[leftmargin=1.65cm,listparindent=-\leftmargin,label=\textbf{A1($\alpha$)}]
\item  \label{ass:strong-convexity} The function
$\varphi_0$ is  convex, continuously differentiable
over $\Omega$, and satisfies 
\begin{align*}
    \varphi_0(y) \geq \varphi_0(x) + \inprod{\nabla \varphi_0(x)}{y - x} + \frac{\alpha}{2} \|y - x\|_2^2,\quad
    \text{for all } x,y \in \Omega.
\end{align*}
\end{enumerate}
\begin{enumerate}[leftmargin=1.65cm,listparindent=-\leftmargin,label=\textbf{A2($\beta$)}]
\item  \label{ass:smoothness} The function
$\varphi_0$ is  convex, continuously differentiable
over $\Omega$, and satisfies 
\begin{align*}
    \varphi_0(y) \leq \varphi_0(x) + \inprod{\nabla \varphi_0(x)}{y - x} + \frac{\beta}{2} \|y - x\|_2^2,
    \quad \text{ for all } x,y\in\Omega.
\end{align*}
\end{enumerate}
\noindent A standard fact in convex analysis is that $\beta$-smoothness of $\varphi_0$ implies $1/\beta$-strong convexity of~$\varphi_0^*$ and $\alpha$-strong convexity of $\varphi_0$ implies $1/\alpha$-smoothness of $\varphi_0^*$ \citep{urruty1993}. When assumptions~\ref{ass:brenier_regularity},~\ref{ass:strong-convexity} and ~\ref{ass:smoothness}
hold then, by Theorem~\ref{thm:santambrogio},
there exists a $P$-almost everywhere uniquely defined transport map $T_0$ from $P$ to $Q$
which is bi-Lipschitz over $\Omega$, and whose
inverse is the $Q$-almost everywhere uniquely defined transport map from $Q$ to $P$.

Our main contribution in this section is the following. 
\begin{theorem}
\label{thm:stability}
Let $P, Q \in \probspace$, and assume~\ref{ass:brenier_regularity}, \ref{ass:strong-convexity} and~\ref{ass:smoothness} hold for some $\alpha, \beta > 0$. For any $\widehat{P}, \widehat{Q} \in \probspace$, let $\widehat{\pi}$ denote an optimal coupling of $\widehat{P}$ and $\widehat{Q}$. Then,
\begin{align}
\label{eqn:mainstability}
\frac{1}{\beta} \mathbb{E}_{(X,Y) \sim \widehat{\pi}} \| Y - T_0(X)\|_2^2 \leq 
\frac{1}{\alpha}  W_2^2(\widehat{Q},Q)  + \beta W_2^2(\widehat{P},P) + 2 W_2(\widehat{P},P) W_2(\widehat{Q},Q). 
\end{align} 
\end{theorem}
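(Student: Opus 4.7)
The plan is to sandwich the semi-dual growth gap
$\Delta := \mathcal{S}_{\widehat{P},\widehat{Q}}(\varphi_0) - \mathcal{S}_{\widehat{P},\widehat{Q}}(\widehat{\varphi})$
between the transport error on the left of~\eqref{eqn:mainstability} and the $W_2$-based expression on the right. Applying Lemma~\ref{lem:bregman} with $(\widetilde{P},\widetilde{Q},\widetilde{\pi}) = (\widehat{P},\widehat{Q},\widehat{\pi})$ identifies $\Delta$ with the integrated Bregman divergence of $\varphi_0^*$ in~\eqref{eqn:bregman}. Since $\beta$-smoothness of $\varphi_0$ (\ref{ass:smoothness}) forces $\varphi_0^*$ to be $(1/\beta)$-strongly convex, this integrand is pointwise at least $(2\beta)^{-1}\|y - T_0(x)\|_2^2$, giving the easy lower bound $\Delta \geq \frac{1}{2\beta}\,\mathbb{E}_{\widehat{\pi}}\|Y - T_0(X)\|_2^2$.

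The real work is to upper bound $\Delta$ by half the right-hand side of~\eqref{eqn:mainstability}. Although $\Delta$ is expressed through the optimal coupling $\widehat{\pi}$, the maximum-correlation property~\eqref{eqn:maxcorr} gives $\mathbb{E}_{\widehat{\pi}}\inprod{X}{Y} \geq \mathbb{E}_{\gamma}\inprod{U}{V}$ for any competing coupling $\gamma$ of $\widehat{P},\widehat{Q}$. Using Kantorovich duality to identify $\mathcal{S}_{\widehat{P},\widehat{Q}}(\widehat{\varphi}) = \mathbb{E}_{\widehat{\pi}}\inprod{X}{Y}$, this upgrades to
\[
\Delta \;\leq\; \mathbb{E}_{\gamma}\bigl[\varphi_0(U) + \varphi_0^*(V) - \inprod{U}{V}\bigr],
\]
where one is now free to pick $\gamma$. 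I would build $\gamma$ by gluing: let $\gamma_P$ be the optimal $W_2$-coupling of $(\widehat{P},P)$ and $\gamma_Q$ that of $(Q,\widehat{Q})$, and chain them through the deterministic optimal coupling $(X',T_0(X'))$ of $(P,Q)$ under~\ref{ass:brenier_regularity}, producing via the gluing lemma a joint law on $(U,X',Y',V)$ with $(U,X') \sim \gamma_P$, $Y' = T_0(X')$, and $(Y',V) \sim \gamma_Q$. Taylor expanding $\varphi_0$ at $X'$ using $\beta$-smoothness and $\varphi_0^*$ at $Y'$ using $(1/\alpha)$-smoothness (which follows from~\ref{ass:strong-convexity}), and collapsing via the Fenchel-Young equality $\varphi_0(X') + \varphi_0^*(Y') = \inprod{X'}{Y'}$, the integrand simplifies to
\[
\varphi_0(U) + \varphi_0^*(V) - \inprod{U}{V} \;\leq\; \inprod{Y'-V}{U-X'} + \tfrac{\beta}{2}\|U-X'\|_2^2 + \tfrac{1}{2\alpha}\|V-Y'\|_2^2.
\]

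Taking expectation over $\gamma$, bounding the cross term by Cauchy-Schwarz, and using $\mathbb{E}\|U-X'\|_2^2 = W_2^2(\widehat{P},P)$ together with $\mathbb{E}\|V-Y'\|_2^2 = W_2^2(\widehat{Q},Q)$ yields
\[
\Delta \;\leq\; \tfrac{\beta}{2}W_2^2(\widehat{P},P) + \tfrac{1}{2\alpha}W_2^2(\widehat{Q},Q) + W_2(\widehat{P},P)\,W_2(\widehat{Q},Q).
\]
Combining with the lower bound and multiplying through by $2$ gives~\eqref{eqn:mainstability}. The principal obstacle is the upper-bound step: a direct attempt to bound the Bregman expression of Lemma~\ref{lem:bregman} against the $W_2$ distances would ask the \emph{same} coupling $\widehat{\pi}$ to simultaneously realize the optimal transports of $(\widehat{P},P)$ and $(\widehat{Q},Q)$, which is impossible in general. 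The max-correlation trick~\eqref{eqn:maxcorr} is precisely what buys the flexibility to splice three independent optimal couplings together through the gluing lemma.
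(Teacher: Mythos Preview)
Your argument is correct and in fact more streamlined than the paper's. Both proofs obtain the lower bound on $\Delta$ the same way, via Lemma~\ref{lem:bregman} and $(1/\beta)$-strong convexity of $\varphi_0^*$. For the upper bound, however, the paper takes an indirect route: it first proves one-sample stability bounds (Lemma~\ref{lem:onesample}) for $\semidualpqhat$ and $\semidualphatq$ separately, then uses the algebraic identity $\semidualphatqhat(\varphi_0) = \semidualphatq(\varphi_0) + \semidualpqhat(\varphi_0) - \semidualpq(\varphi_0)$ together with the representations $\calS_{\cdot,\cdot}(\varphi_{\cdot,\cdot}) = \bbE\inprod{\cdot}{\cdot}$ for four different optimal couplings, and finally applies max-correlation once more to swap $\bbE\inprod{U_3}{V_3}$ for $\bbE\inprod{U_2}{V_2}$ before Cauchy--Schwarz. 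Your approach bypasses Lemma~\ref{lem:onesample} entirely: a single use of max-correlation replaces $\widehat{\pi}$ by the glued coupling $(U,X',T_0(X'),V)$, and then a pointwise second-order expansion of $\varphi_0$ at $X'$ and $\varphi_0^*$ at $Y'=T_0(X')$, collapsed via Fenchel--Young, yields the three terms directly. The net effect is the same bound with identical constants, but your derivation is shorter and makes the role of the gluing lemma explicit; the paper's route has the side benefit of isolating the one-sample inequalities of Lemma~\ref{lem:onesample} (and hence Corollaries~\ref{cor:onesample}--\ref{cor:onesample_target}) as standalone results.
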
 
\noindent At a high-level, this inequality provides an upper bound on a notion of discrepancy between an estimated optimal coupling $\widehat{\pi}$ and the target OT map $T_0$ in terms of the Wasserstein distances between $\widehat{P}$ and $P$, and $\widehat{Q}$ and $Q$. 
It is worth noting that this result holds even when the measures under consideration do not share the same support. This flexibility will be salient in Section~\ref{sec:logconcave}.  

Let us now highlight
several interesting consequences of this general stability bound. 

\vspace{.5cm}

\noindent {\bf One-Sample Stability Bounds:} An instructive special case to consider is when  $P \in \abscont$ is a known distribution, and we only obtain samples $Y_1,\ldots, Y_m \sim Q$. In this case, by Theorem~\ref{thm:brenier}, there is an 
optimal map $\widehat{T}$ between $P$ and $\widehat{Q}$, and our stability bound reduces to the following corollary:
\begin{corollary}
    \label{cor:onesample}
    Let $P \in \abscont, Q \in \probspace$, and assume~\ref{ass:brenier_regularity}, \ref{ass:strong-convexity} and~\ref{ass:smoothness} hold for some $\alpha, \beta > 0$. For any $\widehat{Q} \in \probspace$, 
let $\widehat{T}$ denote the OT map from $P$ to $\widehat{Q}$. Then,
\begin{align}
\label{eqn:onesamplestability}
\frac{1}{\beta} \mathbb{E}_{X \sim P} \| \widehat{T}(X) - T_0(X)\|_2^2 \leq \frac{1}{\alpha} W_2^2(\widehat{Q},Q). 
\end{align} 
\end{corollary}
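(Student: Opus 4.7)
The plan is to derive Corollary~\ref{cor:onesample} as a direct specialization of Theorem~\ref{thm:stability} to the case $\widehat{P} = P$. Since $W_2(P,P) = 0$, both the $\beta W_2^2(\widehat{P},P)$ and $2 W_2(\widehat{P},P)W_2(\widehat{Q},Q)$ terms on the right-hand side of~\eqref{eqn:mainstability} vanish, leaving precisely $(1/\alpha) W_2^2(\widehat{Q},Q)$ as required. The only non-trivial bookkeeping is to rewrite the left-hand side, which is an expectation under an optimal coupling of $(P,\widehat{Q})$, as an expectation of $\|\widehat{T}(X) - T_0(X)\|_2^2$ under $X \sim P$.

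This rewriting is where the absolute continuity hypothesis $P \in \abscont$ enters. By Brenier's theorem (Theorem~\ref{thm:brenier}), since $P$ is absolutely continuous with respect to Lebesgue measure, there exists a $P$-a.e.\ unique optimal transport map $\widehat{T}$ pushing $P$ forward onto $\widehat{Q}$, and moreover the optimal coupling $\widehat{\pi}$ between $P$ and $\widehat{Q}$ is deterministic in the sense that $\widehat{\pi} = (\mathrm{id},\widehat{T})_{\#} P$. Consequently, for $(X,Y) \sim \widehat{\pi}$ we have $Y = \widehat{T}(X)$ almost surely with $X \sim P$, and therefore
\begin{equation*}
\mathbb{E}_{(X,Y)\sim\widehat{\pi}}\|Y - T_0(X)\|_2^2 \;=\; \mathbb{E}_{X \sim P}\|\widehat{T}(X) - T_0(X)\|_2^2.
\end{equation*}

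Substituting this identity into the conclusion of Theorem~\ref{thm:stability} (applied with $\widehat{P} = P$) yields~\eqref{eqn:onesamplestability}. There is essentially no obstacle here; the corollary is a clean specialization and the only subtle point is to notice that the hypothesis $P \in \abscont$ is exactly what is needed to guarantee that the optimal coupling $\widehat{\pi}$ in Theorem~\ref{thm:stability} is induced by the Monge map $\widehat{T}$, so that the stability inequality for couplings reduces to a stability inequality for maps.
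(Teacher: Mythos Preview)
Your proposal is correct and matches the paper's approach exactly: the paper states that Corollary~\ref{cor:onesample} follows from Theorem~\ref{thm:stability} by taking $\widehat{P}=P$ and invoking Brenier's theorem to ensure the optimal coupling is induced by a map $\widehat{T}$. Your bookkeeping that $\widehat{\pi}=(\mathrm{id},\widehat{T})_\# P$ under the absolute continuity of $P$ is precisely the missing detail, and the vanishing of the $W_2(\widehat{P},P)$ terms is immediate.
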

\noindent This recovers a result of \citet{manole2024plugin} (see Theorem 6). We could also consider the case where the source measure is sampled, and the target measure is known:
\begin{corollary}
    \label{cor:onesample_target}
    Let $P, Q \in \probspace$, and assume~\ref{ass:brenier_regularity} and \ref{ass:smoothness} hold for some $\beta > 0$. For any $\widehat{P} \in \probspace$, 
let $\widehat{\pi}$ denote the optimal coupling between $\widehat{P}$ and $Q$. Then,
\begin{align}
\label{eqn:onesamplestability_target}
\frac{1}{\beta} \mathbb{E}_{(X,Y) \sim \widehat{\pi}} \| Y - T_0(X)\|_2^2 \leq \beta W_2^2(\widehat{P},P). 
\end{align} 
\end{corollary}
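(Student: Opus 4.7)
The plan is to view Corollary~\ref{cor:onesample_target} as a one-sided specialization of Theorem~\ref{thm:stability} that avoids any lower curvature assumption. Starting from Lemma~\ref{lem:bregman} applied with $\widetilde{P} = \widehat{P}$ and $\widetilde{Q} = Q$, and using that $\nabla\varphi_0^*(T_0(X)) = X$ under assumption~\ref{ass:brenier_regularity}, one gets
\[
\mathcal{S}_{\widehat{P},Q}(\varphi_0) - \mathcal{S}_{\widehat{P},Q}(\widehat{\varphi}) = \mathbb{E}_{(X,Y)\sim\widehat{\pi}}\bigl[\varphi_0^*(Y) - \varphi_0^*(T_0(X)) - \inprod{X}{Y - T_0(X)}\bigr],
\]
where $\widehat{\varphi}$ denotes a Brenier potential between $\widehat{P}$ and $Q$. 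Since assumption~\ref{ass:smoothness} implies that $\varphi_0^*$ is $(1/\beta)$-strongly convex, the integrand on the right is at least $\frac{1}{2\beta}\|Y - T_0(X)\|_2^2$, so it suffices to prove the matching upper bound $\mathcal{S}_{\widehat{P},Q}(\varphi_0) - \mathcal{S}_{\widehat{P},Q}(\widehat{\varphi}) \leq \frac{\beta}{2} W_2^2(\widehat{P}, P)$.

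By Kantorovich duality, $\mathcal{S}_{\widehat{P},Q}(\widehat{\varphi}) = \mathbb{E}_{\widehat{\pi}}\inprod{X}{Y}$ and $\mathcal{S}_{P,Q}(\varphi_0) = \mathbb{E}_{V\sim P}\inprod{V}{T_0(V)}$ (by Fenchel--Young applied along the optimal coupling of $P$ and $Q$), so the gap decomposes as
\[
\mathcal{S}_{\widehat{P},Q}(\varphi_0) - \mathcal{S}_{\widehat{P},Q}(\widehat{\varphi}) = \int \varphi_0 \, d(\widehat{P} - P) + \mathbb{E}_{V\sim P}\inprod{V}{T_0(V)} - \mathbb{E}_{\widehat{\pi}}\inprod{X}{Y}.
\]
To compare the two couplings, let $\gamma$ be an optimal $W_2$-coupling of $\widehat{P}$ and $P$, and glue $\gamma$ with $\widehat{\pi}$ along their common $\widehat{P}$ marginal to obtain a joint law $\rho$ of the triple $(X, V, Y)$. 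Then $(X, T_0(V))$ is a coupling of $\widehat{P}$ and $Q$ under $\rho$, so the max-correlation property~\eqref{eqn:maxcorr} of the optimal coupling $\widehat{\pi}$ gives $\mathbb{E}_{\widehat{\pi}}\inprod{X}{Y} \geq \mathbb{E}_\rho\inprod{X}{T_0(V)}$, which, together with $\mathbb{E}_\rho\inprod{V}{T_0(V)} = \mathbb{E}_{V\sim P}\inprod{V}{T_0(V)}$, upgrades the gap bound to
\[
\mathcal{S}_{\widehat{P},Q}(\varphi_0) - \mathcal{S}_{\widehat{P},Q}(\widehat{\varphi}) \leq \int \varphi_0 \, d(\widehat{P} - P) + \mathbb{E}_\rho\inprod{T_0(V)}{V - X}.
\]

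To finish, I would invoke $\beta$-smoothness of $\varphi_0$ at $V$ pointwise, namely $\varphi_0(X) - \varphi_0(V) \leq \inprod{T_0(V)}{X - V} + \frac{\beta}{2}\|X - V\|_2^2$. Integrating against $\rho$ and using $\mathbb{E}_\rho \|X - V\|_2^2 = W_2^2(\widehat{P}, P)$ gives $\int \varphi_0 \, d(\widehat{P} - P) \leq \mathbb{E}_\rho\inprod{T_0(V)}{X - V} + \frac{\beta}{2} W_2^2(\widehat{P}, P)$. Substituting into the previous display, the two inner-product terms cancel exactly and the desired upper bound on the semi-dual gap follows. The main subtlety is precisely this cancellation: the max-correlation property of $\widehat{\pi}$ supplies the negative linear term needed to absorb the linear residual left over by anchoring the $\beta$-smoothness inequality at $V$, and this is what lets the argument succeed without any lower curvature bound on $\varphi_0$.
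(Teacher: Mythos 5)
Your proposal is correct and follows essentially the same route as the paper: the $1/\beta$-strong-convexity (Bregman) lower bound from Lemma~\ref{lem:bregman}, Kantorovich duality to write $\mathcal{S}_{\widehat{P},Q}(\widehat\varphi)=\mathbb{E}_{\widehat\pi}\inprod{X}{Y}$, the max-correlation property applied to the competitor coupling obtained by composing the optimal $(\widehat{P},P)$-coupling with $T_0$, and $\beta$-smoothness of $\varphi_0$ anchored at the $P$-point of that coupling\textemdash exactly the ingredients of Lemma~\ref{lem:onesample} (second claim) and the proof of Theorem~\ref{thm:stability}, specialized to $\widehat{Q}=Q$. Your version is simply a cleaner, self-contained specialization in which the cross terms cancel explicitly rather than being tracked through the general two-sample bookkeeping.
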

\noindent To our knowledge, this result is new, and surprisingly requires only smoothness of the Brenier potential $\varphi_0$ and not strong convexity. 

\vspace{.5cm}

\noindent {\bf Two-Sample Stability Bound: } In the general setting, the stability bound~\eqref{eqn:mainstability}
substantially strengthens the corresponding result in~\citet{manole2024plugin} (see Proposition 13).
In particular, under assumptions \ref{ass:strong-convexity} and~\ref{ass:smoothness}, their work was only able to establish an upper bound on the semi-dual growth of the form:
\begin{align*}
 \mathcal{S}_{\widehat{P},\widehat{Q}}(\varphi_0) - \calS_{\widehat{P},\widehat{Q}}(\varphi_{\widehat{P},\widehat{Q}}) \leq \left[\beta + \frac{1}{\alpha}\right] \left[W_2(\widehat{P},P) + W_2(\widehat{Q},Q)\right]^2,
\end{align*}
where~$\varphi_{\widehat{P}, \widehat{Q}}$ denotes
any Brenier potential in the OT problem from $\hat P$
to $\hat Q$.
The above display does not yield useful bounds on the risk of the plugin transport map estimate. To remedy this deficiency, 
\cite{manole2024plugin} instead needed to assume \emph{uniform bounds} on the smoothness and strong-convexity of the \emph{estimated} Brenier potential $\varphi_{\widehat{P},\widehat{Q}}$. This step weakened their results  considerably, requiring them to assume that the distributions under consideration were supported on the flat $d$-dimensional torus and that the target of inference was the torus OT map (the OT map with respect to a modified Euclidean metric). 
In contrast, building upon our improved stability bound we obtain sharp results for the usual plugin estimators of the OT map in the smooth setting, in Section~\ref{sec:smooth}.

\vspace{.5cm}

\noindent {\bf Empirical Stability Bound: } In the case when $\widehat{P}$ and $\widehat{Q}$ are empirical measures, we obtain the following corollary:
\begin{corollary}
\label{cor:empirical}
Let $P, Q \in \probspace$, and assume~\ref{ass:brenier_regularity}, \ref{ass:strong-convexity} and~\ref{ass:smoothness} hold for some $\alpha, \beta > 0$. Let $\widehat{\pi}$ denote an optimal coupling of the empirical distributions $P_n$ and $Q_m$. Then,
\begin{align*}
\frac{1}{\beta} \mathbb{E}_{(X,Y) \sim \widehat{\pi}} \| Y - T_0(X)\|_2^2 \leq \frac{1}{\alpha}  W_2^2(Q_m,Q)  + \beta W_2^2(P_n,P) + 2 W_2(P_n,P) W_2(Q_m,Q). 
\end{align*} 
\end{corollary}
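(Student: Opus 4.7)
The plan is simply to specialize Theorem~\ref{thm:stability} to the empirical measures $\widehat P = P_n$ and $\widehat Q = Q_m$ defined in~\eqref{eq:empirical_measures}.

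First, I would check that the hypotheses of Theorem~\ref{thm:stability} on the ``hatted'' measures are automatic in this setting. The empirical measures $P_n$ and $Q_m$ are finitely supported Borel probability measures on $\mathbb{R}^d$, so they trivially have finite second moments and belong to $\probspace$. By feasibility of the Kantorovich problem~\eqref{eqn:kantorovich} for any pair of probability measures in $\probspace$, there exists an optimal coupling $\widehat \pi$ between $P_n$ and $Q_m$, and this is precisely the coupling to which Theorem~\ref{thm:stability} applies. The assumptions \ref{ass:brenier_regularity}, \ref{ass:strong-convexity}, and \ref{ass:smoothness} on $P$, $Q$, $\alpha$, $\beta$ are imposed in the corollary itself and carry over directly.

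Next, I would plug $\widehat P = P_n$ and $\widehat Q = Q_m$ into the stability inequality~\eqref{eqn:mainstability}. The left-hand side is already of the claimed form $\tfrac{1}{\beta}\, \mathbb{E}_{(X,Y) \sim \widehat \pi}\|Y - T_0(X)\|_2^2$, and the right-hand side becomes exactly
\[
\frac{1}{\alpha} W_2^2(Q_m,Q) + \beta W_2^2(P_n,P) + 2 W_2(P_n,P) W_2(Q_m,Q),
\]
yielding the corollary.

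There is no real obstacle: the corollary is a direct specialization of the main theorem, and the only content is to note that empirical measures satisfy the very mild hypotheses on the perturbed distributions. Its purpose, as will be exploited in the subsequent sections, is to reduce the risk analysis of the empirical plugin estimator (after taking expectations over the samples) to controlling the expected squared 2-Wasserstein distances $\mathbb{E}[W_2^2(P_n,P)]$ and $\mathbb{E}[W_2^2(Q_m,Q)]$, which is a well-studied problem in the statistical OT literature surveyed by \citet{chewi2024}.
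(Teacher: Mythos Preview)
Your proposal is correct and matches the paper's approach exactly: the corollary is stated as an immediate specialization of Theorem~\ref{thm:stability} to the case $\widehat P = P_n$, $\widehat Q = Q_m$, with no additional argument beyond noting that empirical measures lie in $\probspace$.
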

\noindent This result strengthens Proposition 12 of~\citet{manole2024plugin}. 
To build some intuition, it is worthwhile to consider the situation where $n = m$, in which case the optimal coupling between $P_n$ and $Q_n$ is realized by an optimal transport map $\widehat{T}_n$.
Corollary~\ref{cor:empirical} then yields a bound on the \emph{empirical} error of the map $\widehat{T}_n$:
\begin{align*}
\frac{1}{n\beta} \sum_{i=1}^n \| \widehat{T}_n(X_i) - T_0(X_i)\|_2^2 \leq \frac{1}{\alpha}  W_2^2(Q_m,Q)  + \beta W_2^2(P_n,P) + 2 W_2(P_n,P) W_2(Q_m,Q). 
\end{align*} 
The convergence of the empirical measure in the Wasserstein distance has been extensively studied \citep{weed2019,fournier2015,lei2020}, and these results provide sharp bounds on the right-hand side of the above display. In Section~\ref{sec:logconcave} we illustrate how the map $\widehat{T}_n$ can be extended to the domain $\Omega$, providing guarantees on the $L^2(P)$ risk of the estimator $\widehat{T}_n$.

\vspace{.5cm}

\noindent {\bf Semi-Dual Quadratic Growth Bounds: } The stability bound~\eqref{eqn:mainstability} is obtained by upper and lower bounding the growth of a semi-dual functional. As we highlighted earlier, past work analyzing dual estimators growth bounds by upper and lower bounding growth of the population semi-dual functional, i.e.\,by studying the growth of $\semidualpq(\varphi) - \semidualpq(\varphi_0)$ (see~\eqref{eqn:dualstability}).
In contrast, when analyzing plugin estimators, it is
fruitful  to study the growth of the \emph{estimated} semi-dual functional: $\mathcal{S}_{\widehat{P},\widehat{Q}}(\varphi_0) - \mathcal{S}_{\widehat{P},\widehat{Q}}(\varphi_{\widehat{P},\widehat{Q}}).$ An important aspect of studying this latter quantity is that, due to the Bregman representation in Lemma~\ref{lem:bregman}, we are able to provide useful upper and lower bounds on the estimated semi-dual functional without requiring any regularity assumptions on the estimated Brenier potential~$\varphi_{\widehat{P},\widehat{Q}}.$ 

\vspace{.5cm} 

\noindent {\bf Stability Bounds for the OT Coupling: } 
Theorem~\ref{thm:stability}
can also be formulated 
as a stability bound over
the space of couplings. We first define
the Wasserstein distance
between the estimated
coupling $\hat\pi$ and $\pi_0 := (\mathrm{Id},T_0)_\# P$, namely:
\begin{align} \label{eq:W2_coupling} 
W^2_2(\hat\pi,\pi_0)
  = \inf_{\gamma\in\Pi(\hat\pi,\pi_0)}
 \int \|u - v\|_2^2 d\gamma(u,v).
\end{align}
Then, as a direct consquence of Corollary 3.9 of~\citet{li2021quantitative}, and our stability bound in~\eqref{eqn:mainstability} we obtain the following result:
\begin{corollary}
\label{cor:stability}
Under the conditions of Theorem~\ref{thm:stability}, 
there exists a constant $C > 0$
depending only on $\alpha$ and $\beta$
such that
\begin{align} 
\label{eq:coupling_stability}
W^2_2(\hat\pi,\pi_0) \leq 
C \big(   W^2_2(\widehat{P},P) + W^2_2(\widehat{Q},Q)  \big). 
\end{align} 
\end{corollary}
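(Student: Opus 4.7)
The plan is to combine Theorem~\ref{thm:stability}, which controls the residual $\mathbb{E}_{(X,Y)\sim\widehat{\pi}}\|Y - T_0(X)\|_2^2$ along the estimated coupling, with a standard gluing argument that upgrades such a residual bound into a bound on $W_2^2(\widehat{\pi},\pi_0)$. This upgrading is exactly what Corollary~3.9 of~\citet{li2021quantitative} packages, and the proof amounts to invoking that result; I sketch the elementary construction behind it to make the dependence on $\alpha$ and $\beta$ transparent.

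First, observe that $\pi_0 = (\mathrm{Id},T_0)_{\#}P$ is a deterministic coupling and that assumption~\ref{ass:smoothness} renders $T_0 = \nabla\varphi_0$ a $\beta$-Lipschitz map. Let $\sigma\in\Pi(\widehat{P},P)$ attain $W_2^2(\widehat{P},P)$, and let $\{\widehat{\pi}_x\}_{x\in\Omega}$ denote the disintegration of $\widehat{\pi}$ against its first marginal $\widehat{P}$. I would build a coupling $\gamma\in\Pi(\widehat{\pi},\pi_0)$ by the Markovian rule: sample $(X,X')\sim\sigma$, draw $Y\sim\widehat{\pi}_X$ conditionally on $X$, and set $Y'=T_0(X')$. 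By construction $(X,Y)\sim\widehat{\pi}$ and $(X',Y')\sim\pi_0$, so from the definition~\eqref{eq:W2_coupling},
\begin{equation*}
W_2^2(\widehat{\pi},\pi_0) \leq \mathbb{E}\|X-X'\|_2^2 + \mathbb{E}\|Y-T_0(X')\|_2^2 = W_2^2(\widehat{P},P) + \mathbb{E}\|Y-T_0(X')\|_2^2.
\end{equation*}

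The residual term is then dispatched by the triangle inequality $\|Y-T_0(X')\|_2^2\leq 2\|Y-T_0(X)\|_2^2 + 2\|T_0(X)-T_0(X')\|_2^2$: the first piece averages to $\mathbb{E}_{(X,Y)\sim\widehat{\pi}}\|Y-T_0(X)\|_2^2$, while the $\beta$-Lipschitzness of $T_0$ bounds the second by $2\beta^2 W_2^2(\widehat{P},P)$. Substituting the bound from Theorem~\ref{thm:stability} into the first piece and using the AM--GM inequality $2W_2(\widehat{P},P)W_2(\widehat{Q},Q)\leq W_2^2(\widehat{P},P)+W_2^2(\widehat{Q},Q)$ to absorb the cross term produces~\eqref{eq:coupling_stability} with a constant $C=C(\alpha,\beta)$. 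The only technical points are the measurable selection implicit in the disintegration (standard on Polish spaces) and the bookkeeping of constants; neither constitutes a serious obstacle, and this is why the statement is listed as a corollary of Theorem~\ref{thm:stability} rather than as an independent result.
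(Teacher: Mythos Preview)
Your argument is correct and follows essentially the same route as the paper: the corollary is obtained by feeding the residual bound of Theorem~\ref{thm:stability} into the coupling-level upgrade of~\citet[Corollary~3.9]{li2021quantitative}, and your gluing construction is precisely the elementary mechanism behind that cited result. The only difference is that the paper defers the construction to the citation, whereas you spell it out; this has the benefit of making the $(\alpha,\beta)$-dependence of $C$ explicit.
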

\noindent This Corollary elucidates a certain sharpness of  our stability bounds. Since
the cost function $\|u-v\|_2^2$ 
in~\eqref{eq:W2_coupling} depends
additively on its coordinates, 
it must be the case that the Wasserstein distance between couplings
is bounded from below by the
Wasserstein distance between
the corresponding marginal distributions, i.e.:
$$W^2_2(\hat\pi,\pi_0) \geq \max\big\{ W^2_2(\hat P,P), W^2_2(\hat Q,Q)\big\}.$$
It follows
that the bound in Corollary~\ref{cor:stability}
is sharp up to  constants.

\vspace{.5cm}

Finally, it is worth noting that in contrast to past work we obtain each of our stability bounds as direct consequences of a single unified result. We now briefly discuss a few highlights of the proof of Theorem~\ref{thm:stability} before providing some historical context for our work.

\vspace{.5cm}

\noindent {\bf Proof Highlights for Theorem~\ref{thm:stability}: } We give a full proof of this result in Appendix~\ref{app:stabilityproof}, but discuss some interesting aspects of the proof here.
Despite the fact that our goal is to prove a stability bound, we proceed by establishing a semi-dual growth bound. Concretely, by Lemma~\ref{lem:bregman} we have that,
\begin{align*} \mathcal{S}_{\widehat{P},\widehat{Q}}(\varphi_0) - \calS_{\widehat{P},\widehat{Q}}(\varphi_{\widehat{P},\widehat{Q}}) = \int \left[ \varphi^*_{0}(y) - 
\varphi^*_0(T_0(x)) - \inprod{\nabla \varphi_0^*(T_0(x))}{y - T_0(x)}\right] d \widehat{\pi}(x,y). 
\end{align*}
Our goal will be to upper and lower bound this quantity by the two sides of the inequality~\eqref{eqn:mainstability}, thus establishing the theorem.
Noting that $\varphi_0^*$ is $1/\beta$ strongly convex by~\ref{ass:smoothness} we have:
\begin{align*}
    \frac{1}{\beta} \mathbb{E}_{(X,Y) \sim \widehat{\pi}} \|Y - T_0(X)\|_2^2 \leq \mathcal{S}_{\widehat{P},\widehat{Q}}(\varphi_0) - \calS_{\widehat{P},\widehat{Q}}(\varphi_{\widehat{P},\widehat{Q}}),
\end{align*}
which immediately yields the desired lower bound. 

In contrast to the proof of the lower bound which only uses pointwise bounds (i.e. bounds which apply to the integrand pointwise), the proof of the upper bound is more involved using instead average-case optimality properties of the OT coupling~\eqref{eqn:maxcorr}. To illustrate the main ideas, let us consider the case when $\widehat{P}$ is taken to be the true distribution $P$, and focus on upper bounding the difference $\mathcal{S}_{P,\widehat{Q}}(\varphi_0) - \calS_{P,\widehat{Q}}(\varphi_{P,\widehat{Q}})$. 

Let $\widetilde{\pi}$ denote an optimal coupling of $P$ and $\widehat{Q}$, and 
let $(X,Y) \sim \tilde\pi$. 
Then, 
\begin{align*}
    \mathcal{S}_{P,\widehat{Q}}(\varphi_0) - \calS_{P,\widehat{Q}}(\varphi_{P,\widehat{Q}}) = \mathbb{E}\left[ \varphi_0^*(Y) - \varphi_0^*(T_0(X)) - \inprod{X}{Y - T_0(X)} \right],
\end{align*}
using the fact that $\nabla \varphi_0^*(T_0(X)) = X$ almost surely.
Now, given any random variable $Z$ with marginal distribution $\widehat{Q}$, we can equivalently write, 
\begin{align*}
    \mathcal{S}_{P,\widehat{Q}}(\varphi_0) - \calS_{P,\widehat{Q}}(\varphi_{P,\widehat{Q}}) = \mathbb{E} \left[ \varphi_0^*(Z) - \varphi_0^*(T_0(X)) - \inprod{X}{Z - T_0(X)} \right] - 
    \bbE \inprod{X}{Y - Z} .
\end{align*}
Recalling from~\eqref{eqn:maxcorr} that 
\begin{align*}
    \mathbb{E}_{(X,Y) \sim \widetilde{\pi}} \inprod{X}{Y} \geq \mathbb{E} \inprod{X}{Z},
\end{align*}
we obtain,
\begin{align*}
    \mathcal{S}_{P,\widehat{Q}}(\varphi_0) - \calS_{P,\widehat{Q}}(\varphi_{P,\widehat{Q}}) \leq  \mathbb{E} \left[ \varphi_0^*(Z) - \varphi_0^*(T_0(X)) - \inprod{X}{Z - T_0(X)}  \right].
\end{align*}
Using the $(1/\alpha)$-smoothness of $\varphi_0^*$ we obtain that,
\begin{align*}
    \mathcal{S}_{P,\widehat{Q}}(\varphi_0) - \calS_{P,\widehat{Q}}(\varphi_{P,\widehat{Q}}) \leq \frac{1}{\alpha} \mathbb{E} \|Z - T_0(X)\|_2^2.
\end{align*}
The above display holds
for any random variable $Z \sim \hat Q$, 
thus we are free to choose its joint distribution with $X$. 
Let $\pi_{Q,\widehat{Q}}$ denote an optimal coupling of $Q$ and $\widehat{Q}$, and pick $Z$ such that
$(T_0(X),Z) \sim \pi_{Q,\hat Q}$.
Then,
\begin{align*}
    \mathbb{E} \|Z -T_0(X)\|_2^2 = \mathbb{E}_{(U,V) \sim \pi_{Q,\widehat{Q}}} \|V-U\|_2^2 = W_2^2(Q, \widehat{Q}),
\end{align*}
which yields the bound:
\begin{align*}
\mathcal{S}_{P,\widehat{Q}}(\varphi_0) - \calS_{P,\widehat{Q}}(\varphi_{P,\widehat{Q}}) \leq \frac{1}{\alpha} W_2^2(Q,\widehat{Q}).
\end{align*}
The proof of the general upper bound, when $\widehat{P}$ is potentially different from $P$, requires a sharpening of the above argument (see Lemma~\ref{lem:onesample}) and we defer the details to Appendix~\ref{app:stabilityproof}.

\subsection{Related Work} 
With Theorem~\ref{thm:stability} in place we can now provide the appropriate context for our result. 
Stability bounds of the form we describe have a long history. For instance, a qualitative implication of the stability bound we derive in Corollary~\ref{cor:onesample} is that   the convergence of $\widehat{Q}$ to $Q$ in the $W_2$ metric implies the convergence of the OT map $\widehat{T}$ to $T_0$ in the $L^2(P)$ metric.
Qualitative results of this type are well-studied (for instance, see~\citet[Exercise 2.17]{villani2003}, 
\citet[Proposition 1.7.11]{panaretos2020},
and
\cite{segers2022}), and hold under significantly weaker conditions than the ones we impose. On the other hand, qualitative results do not typically yield sharp statistical rates on the risk in~\eqref{eqn:risk}.

To our knowledge, 
the earliest quantitative stability result
was derived by~\cite{gigli2011}
(who attributed it to Ambrosio)
 under the strong 
convexity 
assumption~\ref{ass:strong-convexity}
(see also Theorem~3.2 of \cite{ambrosio2019} and Theorem~3.5 of
\cite{li2021quantitative}).
Gigli showed that if  
$P\in \calP_{2,\mathrm{ac}}(\bbR^d)$, $\hat T$ is a   transport map from $P$ to $\hat Q$,
and $T_0=\nabla\varphi_0$ is the OT map from $P$ to $Q$, with the potential $\varphi_0$ satisfying condition~\ref{ass:strong-convexity}, then:
\begin{align*}
    \|T_0 - \hat T\|_{L^2(P)}^2 \lesssim \mathbb{E}_{X\sim P}\|\hat T(X) - X\|_2^2 - \mathbb{E}_{X \sim P} \|T_0(X) - X\|_2^2.
\end{align*}
This result suggests that the excess transport cost of a sub-optimal transport map grows in proportion to the $L^2(P)$ distance of the map from the OT map. This result is not directly useful for the statistical analysis of transport map estimates since it requires~$\hat T$ to be a valid transport map between $P$ and $\hat Q$, a condition that is not always satisfied by estimates of the map.
This deficiency inspired \citet{hutter2021} (see their Proposition~10) to develop the quadratic growth bound on the semi-dual functional $\semidualpq$, showing:
\begin{align*}
 \|\nabla \varphi - T_0\|_{L^2(P)}^2 \lesssim \semidualpq(\varphi) - \semidualpq(\varphi_0) \lesssim \|\nabla \varphi - T_0\|_{L^2(P)}^2,
\end{align*}
under the assumptions that $\varphi, \varphi_0$ are both smooth and strongly convex. \citet{vacher2021convex, muzellec2021} and \citet{makkuva2020} 
observed that the proof of \cite{hutter2021} also applies when $\varphi$ (alone) is smooth and strongly convex.
 
While the work of \citet{hutter2021} provides a semi-dual growth bound useful for the analysis of dual estimators, the works of \citet{ghosal2022,deb2021} and \citet{manole2024plugin} provide stability bounds useful for the analysis of plugin estimators. We have already discussed the results of \citet{manole2024plugin}, and highlighted how Theorem~\ref{thm:stability} provides useful improvements to their results. The stability bounds of~\citet{ghosal2022,deb2021} are quantitatively weaker than those in~\citet{manole2024plugin}, and replace the upper bound in~\eqref{eqn:mainstability}, by an empirical process term involving the estimated measures $\widehat{P}, \widehat{Q}$ and the corresponding Brenier potential $\varphi_{\widehat{P},\widehat{Q}}$. Analyzing this term to obtain risk bounds then necessitates strong assumptions to reason about the regularity of the Brenier potential $\varphi_{\widehat{P},\widehat{Q}}$ (akin to the torus assumption made by \citet{manole2024plugin}). 
A na\"{i}ve analysis of the empirical process term has a further deficiency which can result in sub-optimal rates of convergence for transport map estimates. In rough terms, the sharper upper bounds of Theorem~\ref{thm:stability} behave as the \emph{square} of an empirical process, yielding much faster rates of convergence.

Thus far, we have discussed
quantitative stability bounds
when one of conditions~\ref{ass:strong-convexity}
and~\ref{ass:smoothness} hold. 
In contrast, the works of \citet{berman2021convergence,merigot2020,delalande2023,gallouet2022strong}, and \citet{letrouit2024}  provide various stability bounds under variations of the target measure, which hold without smoothness assumptions on the optimal transport maps involved. That is, 
given a sufficiently regular
set $\Omega \subseteq \bbR^d$
and measure $P \in \calP_2(\Omega)$,
they derive bounds of the form:
\begin{equation}
\label{eq:merigot}
\|\hat T - T_0\|_{L^2(P)} \lesssim  W_2^\alpha(\hat Q,Q),
\end{equation}
for any measures $\hat Q,Q \in \calP(\Omega)$
satisfying appropriate tail conditions,
and with corresponding H\"older exponents $\alpha\in[0,1]$
which can be taken as high as $\alpha=1/6$.
Conversely, in this level
of generality, it
has been known since the work
of~\cite{gigli2011}
that the exponent $\alpha$ cannot be made greater than $1/2$, 
and it remains an open question to determine
whether this threshold can be achieved.
Our stability bounds show that 
the much more favorable H\"older exponent $\alpha=1$
is achievable when one of the optimal
transport maps is smooth. 

Beyond the statistical applications
that we have in mind, 
bounds of type~\eqref{eq:merigot} are highly sought after, as
the quantity $\|\hat T-T_0\|_{L^2(P)}$
is itself a metric between
$\hat Q$ and $Q$, which can be viewed as a proxy
for the Wasserstein distance. 
This quantity is sometimes
known as the linearized Wasserstein distance~\citep{wang2013linear},
and is widely-used in applications
due to its Hilbertian structure, and  its favorable computational properties~(e.g.\,\cite{cai2020linearized}). 
Corollary~\ref{cor:onesample} provides 
compelling motivation for the use of 
linearized Wasserstein distance, by showing
that it is in fact equivalent to the
original Wasserstein distance
under appropriate smoothness assumptions.

\section{Estimating OT Maps Between Smooth Distributions}
\label{sec:smooth}
As a first application, we will now show how Theorem~\ref{thm:stability}
can be used to characterize the risk of
plugin estimators of optimal transport maps
between smooth densities. 
Let 
$P$ and $Q$ denote two absolutely continuous
distributions over $\Omega$
with respective densities $p$ and $q$. 
To define our class of plugin estimators, 
let $\hat P_n$
denote an estimator of 
the distribution $P$ based
on the i.i.d. sample $X_1,\dots,X_n \sim P$.
We assume that $\hat P_n$ is  a {\it proper}
estimator, in the sense that it    
almost surely defines a probability measure 
in its own right. We further assume 
that $\hat P_n$ is absolutely continuous with respect
to the Lebesgue measure on $\Omega$, almost surely. 
Finally, we denote by $\hat Q_m$ any proper 
estimator of $Q$
based on the i.i.d. sample
$Y_1,\dots,Y_m \sim Q$. 
Due to the absolute continuity
of~$\hat P_n$, Theorem~\ref{thm:brenier} implies that there 
exists a unique optimal transport coupling between~$\hat P_n$ and~$\hat Q_m$ which is induced by an 
optimal transport map~$\hat T_{nm}$. 
This uniquely defined map~$\hat T_{nm}$ is a natural
plugin estimator of $T_0$, and 
Theorem~\ref{thm:stability} implies the
following bound on its $L^2(\hat P_n)$ risk:
$$\bbE \|\hat T_{nm} - T_0\|_{L^2(\hat P_n)}^2 
\lesssim \bbE \Big[W_2^2(\hat P_n,P) + 
 W_2^2(\hat Q_m,Q)\Big],$$
 provided that $T_0$ satisfies assumptions~\ref{ass:strong-convexity}--\ref{ass:smoothness}.
In order to characterize the $L^2(P)$ risk
of $\hat T_{nm}$ from here, one requires two ingredients:
\begin{enumerate}
\item {\bf Risk of Distribution Estimation in Wasserstein Distance.}
First, one needs to provide upper bounds on the risk of $\hat P_n$ and $\hat Q_n$ as estimates of $P$ and $Q$ in the Wasserstein distance. Bounds of this type are well-studied in the literature for a variety of estimators, including smooth density estimators~\citep{weed2019a,divol2022}, and 
the empirical measure~(e.g. \citep{boissard2014a,fournier2015, bobkov2019,weed2019,lei2020}, and references
therein).
\item {\bf Population and Empirical $L^2$ Norms.}
Second, one needs to relate the $L^2(\hat P_n)$ 
risk to the $L^2(P)$ risk. The simplest 
approach is to work with an estimator $\hat P_n$
for which the density ratio $dP / d\hat P_n$ is 
almost surely bounded over $\Omega$, 
in which case the $L^2(P)$ risk is trivially bounded
above by the $L^2(\hat P_n)$ risk: 
$$\bbE \|\hat T_{nm} - T_0\|_{L^2(P)}^2 
 \lesssim \bbE \|\hat T_{nm} - T_0\|_{L^2(\hat P_n)}^2.$$
When such a condition is not met, one
might instead try to bound the deviations
$$\Big| \|\hat T - T_0\|_{L^2(\hat P_n)}^2 - \|\hat T - T_0\|_{L^2(P)}^2\Big|
= \left| \int \|\hat T - T_0\|_2^2 d(\hat P_n-P)\right|,$$
uniformly over the set of allowable transport
maps $\hat T$. If $\hat P_n$ were replaced 
by the empirical measure of $X_1,\dots,X_n$, 
the above would simply be an empirical process indexed by the functions $\|\hat T-T_0\|_2^2$. Similar
quantities arise in the classical
theory of nonparametric least squares
regression, where 
localization and empirical process 
arguments are typically used to relate
the $L^2(P_n)$ risk of least squares
estimators to their $L^2(P)$ risk~\citep{vandegeer2000}.
Such arguments cannot directly
be translated to our setting since
$\hat P_n$ cannot be taken to be the
empirical measure, however
for some choices of this estimator, such as linear smoothers,
the above empirical process can
in principle still be controlled using 
known bounds on suprema of smoothed
empirical processes~\citep{radulovic2003,gine2009}. 
In Section~\ref{sec:logconcave}, 
we will illustrate a third
method for relating the $L^2(P)$
and $L^2(\hat P_n)$ norms, tailored to the nearest neighbor-based transport map estimator, which is 
based on characterizing the Voronoi cells generated by $X_1,\dots,X_n$.
\end{enumerate}

\noindent Let us now provide an example of an estimator
for which the above steps can be carried out. 
We adopt the same setting as~\citet[Section~4.4]{manole2024plugin},
who derive upper bounds on a smooth plugin
estimator {\it conditionally} on a boundary
regularity condition
(cf. condition~(C2) therein).
In contrast, we will derive an {\it unconditional}
version of their result in this section, 
which is enabled by the fact that our new stability bound in Theorem~\ref{thm:stability}
does not place any assumptions on the fitted
coupling.

We adopt the same notation and conditions as~\cite{manole2024plugin}. Concretely,  
assume that $\Omega$ is a {\it known}
compact, convex subset of $\bbR^d$ whose
boundary is $\calC^\infty$. 
Without loss of generality, 
let $\Omega$ have unit volume. 
Assume that the true
distributions $P$ and $Q$ have densities which lie in the following ball of H\"older continuous functions:
$$\calC^s(\Omega;M,\gamma)
 = \Big\{ f\in \calC^s(\Omega) : 
 \|f\|_{\calC^s(\Omega)} \leq M, 
 f \geq \gamma \text{ over } \Omega,
 D^k f = 0 \text{ on } \partial\Omega, k=1,\dots,\lfloor s\rfloor \Big\},
 $$
where $M,\gamma,s > 0$, and where
differentiation along the boundary
is to be understood in the 
weak sense~(cf. Appendix~J.1\,of~\cite{manole2024plugin}).
Beyond the smoothness
constraint, the class $\calC^s(\Omega;M,\gamma)$
requires the densities to be bounded away
from zero over the domain, and to have
vanishing derivatives at the boundary\footnote{The  condition of vanishing derivatives can be relaxed
to a Neumann boundary condition of sufficiently
high order, as in the definition of the function class $C_N^s(\Omega)$
of~\citet[Section 4.4]{manole2024plugin}.}.
It is well-known
that the Wasserstein density estimation
risk depends strongly on whether  or
not the densities are lower-bounded; 
we focus only on the
lower-bounded case, for which
the sharp minimax rate 
has been precisely characterized~\citep{bobkov2019,weed2019a,divol2022}. 
On the other hand, 
the boundary condition
allows us to construct density estimators
which are exactly supported on 
$\Omega$, which in turn 
is convenient for relating 
the Wasserstein density estimation risk to 
the simpler risk of estimating a density
under the norm of a 
particular Hilbert space~\citep{peyre2018}.
These are certainly not the most general conditions under which Wasserstein density
estimation can be studied, and improved results for Wasserstein density estimation will have direct
implications for the induced
optimal transport map estimators, due
to the modular nature of Theorem~\ref{thm:stability}.

Finally, in addition to the above conditions on the densities, we 
will assume that the $T_0$
satisfies the curvature conditions
described in Section~\ref{sec:curv}.
That is, we will assume that the 
true densities lies in the class 
\begin{align*} \calF^s(\Omega;M,\gamma,\alpha,\beta)= \big\{(p,q)\in \calD^2: p,q\in \calC^s(\Omega;M,\gamma),  &\text{ there exists }  
 \text{a Brenier potential } \varphi_0 \\
 &\text{ from } P \text{ to } Q  \text{ satisfying 
 \ref{ass:strong-convexity}--\ref{ass:smoothness}}\big\},
 \end{align*}
 where $\calD$ is the set of Lebesgue densities
 on $\bbR^d$.
By a well-known 
result of~\cite{caffarelli1996}, the condition 
$p,q\in \calC^s(\Omega;M,\gamma)$ 
implies the existence of a 
Brenier potential
from $P$ to $Q$
which satisfies
conditions~\hyperref[ass:strong-convexity]{\textbf{A1($\tilde\alpha$)}--\hyperref[ass:smoothness]{\textbf{A2($\tilde\beta$)}}}
for some $\tilde\alpha,\tilde\beta > 0$. To the best
of our knowledge, however,
there is no known
quantitative relation between
the resulting
parameters $\tilde\alpha,\tilde\beta$
and the original problem parameters $M,d,\gamma,s$,
thus we prefer to formulate the above assumption
on $\varphi_0$ directly. 
Similar considerations
are discussed by~\citet[Appendix E]{hutter2021}
and~\citet[Section 4.3]{manole2024plugin}.
 
The main result of this section is:
\begin{theorem}
\label{thm:smooth_densities}
For any $s > 0$, 
there exist proper and absolutely continuous  estimators 
$\hat P_n$ and $\hat Q_n$
such that
for any $M,\gamma,\alpha,\beta > 0$, there
exists a constant $C = C(M,\Omega,\gamma,s,\alpha,\beta) > 0$
for which the unique optimal
transport map $\hat T_n$ pushing
$\hat P_n$ forward onto $\hat Q_m$
satisfies
\begin{align} 
\label{eq:smooth_rate} 
\sup_{(p,q) \in \calF^s(\Omega;M,\gamma,\alpha,\beta)} 
R(\hat T_n,T_0) \leq 
C \epsilon_{n\wedge m},\quad \text{where } 
\epsilon_n:=\begin{cases} 
1/n, & d = 1, \\ 
\log n/n, & d = 2, \\
n^{-\frac{2(s+1)}{2s+d}}, & d \geq 3.
\end{cases}
\end{align} 
\end{theorem}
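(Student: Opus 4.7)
The plan is to combine Theorem~\ref{thm:stability} with known minimax rates for Wasserstein density estimation over H\"older classes, while using density estimators bounded below on $\Omega$ to convert the resulting $L^2(\hat P_n)$ bound into an $L^2(P)$ bound. Since $\hat P_n$ will be chosen absolutely continuous, Brenier's theorem guarantees that the optimal coupling between $\hat P_n$ and $\hat Q_m$ is realized by the unique map $\hat T_{nm}$, so Theorem~\ref{thm:stability} together with the AM--GM inequality applied to the cross term gives
\begin{align*}
\mathbb{E}\|\hat T_{nm} - T_0\|_{L^2(\hat P_n)}^2 \,\lesssim\, \mathbb{E}\!\left[W_2^2(\hat P_n, P) + W_2^2(\hat Q_m, Q)\right],
\end{align*}
with the implicit constant depending only on $\alpha$ and $\beta$.

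The next step is to specify $\hat P_n$ and $\hat Q_m$. I would start from a wavelet projection density estimator $\tilde p_n$ of $p$ at a resolution chosen to balance bias and variance in the negative Sobolev norm $\dot H^{-1}(\Omega)$ dual to $W_2$, following \cite{peyre2018,weed2019a,divol2022}; the Neumann boundary condition built into $\calC^s(\Omega;M,\gamma)$ is precisely what allows this projection to achieve the minimax rate $\epsilon_n$ for $\mathbb{E} W_2^2(\tilde P_n, P)$ without incurring boundary bias. To turn $\tilde p_n$ into a proper, absolutely continuous estimator bounded below, I would then truncate pointwise to the range $[\gamma/2, 2M]$, multiply by the indicator of $\Omega$, and renormalize to integrate to one. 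Since both $p$ and the truncated density lie in $[\gamma/2, 2M]$, a triangle-inequality argument shows the rate $\epsilon_n$ is preserved, yielding a proper absolutely continuous estimator $\hat P_n$ whose density is almost surely bounded below by $\gamma/2$ on $\Omega$. An identical construction gives $\hat Q_m$ satisfying the same bounds with rate $\epsilon_m$.

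The final step is to pass from $L^2(\hat P_n)$ to $L^2(P)$. Because $p \leq M$ on $\Omega$ and $\hat p_n \geq \gamma/2$ almost surely, for any vector field $U$ supported on $\Omega$ we have
\begin{align*}
\|U\|_{L^2(P)}^2 \,\leq\, \frac{2M}{\gamma}\,\|U\|_{L^2(\hat P_n)}^2
\end{align*}
almost surely. Both $\hat T_{nm}$ and $T_0$ take values in $\Omega$, so taking $U = \hat T_{nm} - T_0$ and applying the above chain of inequalities gives $R(\hat T_{nm}, T_0) \lesssim \mathbb{E}[W_2^2(\hat P_n, P) + W_2^2(\hat Q_m, Q)] \lesssim \epsilon_{n\wedge m}$, uniformly over the class.

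The main obstacle is ensuring that the constructed density estimator simultaneously (i) achieves the minimax $W_2^2$ rate over $\calC^s(\Omega;M,\gamma)$, (ii) defines a probability measure, (iii) is absolutely continuous, and (iv) is bounded below by a positive constant on $\Omega$. Each requirement is standard in isolation, but one must verify that truncation and renormalization do not destroy the Wasserstein rate, which is the point at which the Neumann boundary condition on $\calC^s(\Omega;M,\gamma)$ plays its essential role. The key simplification from prior work is that Theorem~\ref{thm:stability} places no smoothness or strong-convexity assumption on the Brenier potential between $\hat P_n$ and $\hat Q_m$, so we completely bypass Caffarelli's boundary regularity for the Monge--Amp\`ere equation, which is what forced~\cite{manole2024plugin} to restrict attention to the flat torus in order to obtain an unconditional result.
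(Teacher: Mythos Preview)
Your proposal is correct and follows essentially the same route as the paper: apply Theorem~\ref{thm:stability} to bound the $L^2(\hat P_n)$ risk by Wasserstein density-estimation errors, invoke the minimax $W_2$ rates for estimators built from wavelet projections over $\calC^s(\Omega;M,\gamma)$, and then pass to $L^2(P)$ via a uniform bound on $dP/d\hat P_n$.

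The one technical difference is how the density lower bound is secured. You enforce $\hat p_n \geq c(\gamma,M)$ \emph{almost surely} by hard truncation and renormalization (note: after renormalization the lower bound is $\gamma/(4M)$ rather than $\gamma/2$, since the normalizing constant can be as large as $2M$, but this is immaterial). The paper instead cites the specific estimator of~\cite{manole2024plugin}, for which the lower bound holds only on an event $A_n$ of probability $1-O(n^{-2})$; on $A_n^\cp$ it uses the compactness of $\Omega$ to bound $\|\hat T_n - T_0\|_{L^2(P)}^2$ trivially, picking up a harmless $O(n^{-2})$ term. Your variant is arguably cleaner because it avoids this conditioning step, at the cost of having to argue that truncation and renormalization preserve the $W_2^2$ rate---which you correctly flag as the main point requiring care, and which is exactly what Lemmas~64--65 of~\cite{manole2024plugin} establish for their estimator. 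Both approaches are standard and neither buys anything substantive over the other.
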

\begin{proof}
Assume without loss of generality
that $n=m$. Let $(p,q)\in \calF^s(\Omega;M,\gamma,\alpha,\beta)$.
We consider the estimators
$\hat P_n$ and $\hat Q_n$ defined
in Section~4.4 of~\cite{manole2024plugin}.
By Lemma~64 therein, there exists 
a constant $C_1 = C_1(M,\Omega,\gamma,s)$ and an
event $A_n$ of probability content at least
$1-C_1/n^2$
such that  the density $\hat p_n$ of $\hat P_n$
satisfies
$\hat p_n \geq 1 / C_1$
over $\Omega$.
Furthermore, since we have assumed that
$p\leq M$ over $\Omega$, 
we deduce that the density ratio
$dP/d\hat P_n$ is bounded over $\Omega$
by a constant depending only on $M,\gamma,d,s$, 
with probability at least $1-C_1/n^2$. 
Thus, 
\begin{align*}
\bbE \|\hat T_n - T_0\|_{L^2(P)}^2
 &\leq \bbE\Big[  \|\hat T_n - T_0\|_{L^2(P)}^2\,\Big|\, A_n\Big] + 
  \bbE\Big[  \|\hat T_n - T_0\|_{L^2(P)}^2 \,\Big|\, A_n^\cp\Big] \bbP(A_n^\cp) \\  
 &\lesssim \bbE \Big[\|\hat T_n - T_0\|_{L^2(\hat P_n)}^2\Big] 
  +  n^{-2},
\end{align*}
where we obtained the final term by noting
that $\|\hat T_n - T_0\|_{L^2(P)}^2$
is trivially bounded by a constant, due to the compactness of $\Omega$.  
We are now in a position to apply
Theorem~\ref{thm:stability}, which leads to:
\begin{align*}
\bbE \|\hat T_n - T_0\|_{L^2(P)}^2 
 &\lesssim  W_2^2(\hat P_n,P) +  W_2^2(\hat Q_n,Q) + n^{-2}.
\end{align*}
By Lemma~65 of~\cite{manole2024plugin}, 
the convergence rate of the density
estimators~$\hat P_n$ and~$\hat Q_n$ 
is bounded  above by
$\epsilon_n$ under
our conditions. The 
claim thus follows.
\end{proof}

Theorem~\ref{thm:smooth_densities}
is analogous
to Theorem~18 of~\cite{manole2024plugin}, with
the important difference
that it does not
rely on their assumption~(C2),
which they were unable to verify.
This assumption was
important for their work
since it implied that their estimator $\hat T_{nm}$ has an induced potential $\hat\varphi$ which
satisfies 
the regularity conditions~\ref{ass:strong-convexity}--\ref{ass:smoothness}
with high probability.
We are able to circumvent
this route since our
new stability bound
in Theorem~\ref{thm:stability}
places no smoothness assumptions on the fitted potential.

The convergence rate
appearing in Theorem~\ref{thm:smooth_densities}
matches known minimax lower bounds for estimating OT maps between H\"older-continuous densities~\citep{hutter2021}, up to a logarithmic factor when $d=2$. 
To our knowledge, this is the first
(unconditional) proof of minimax optimality of a two-sample plugin estimator for OT maps under H\"{o}lder regularity assumptions.

\section{An Improved Analysis of the Nearest-Neighbor Estimator}
\label{sec:nonsmooth}\label{sec:logconcave}

As a further illustration of  Theorem~\ref{thm:stability}, we consider the case where $\widehat{T}_{nm}$ is taken to be the 
empirical optimal transport map, extended to $\bbR^d$ via one-nearest-neighbor extrapolation. This  estimator has been studied in past work by~\cite{manole2024plugin}
(see also~\cite{pooladian2023minimax}),
however their analysis required
$P$ to be compactly-supported, 
and to 
admit a density bounded away
from zero over its  support. In what follows, 
we significantly weaken
these assumptions,
showing that the nearest-neighbor
estimator is minimax optimal
in estimating bi-Lipschitz
optimal transport maps, 
subject only to mild
moment constraints.

Concretely, given i.i.d. samples $X_1,\ldots,X_n \sim P$ and $Y_1,\ldots,Y_m \sim Q$, let
$\widehat{\pi}_{nm}$ be (the 
probability mass function of) an
optimal transport coupling 
between the empirical measures
$P_n$ and $Q_m$. That is,  
$$
\hat \pi_{nm} \in \argmin_{\pi}
\sum_{i=1}^n \sum_{j=1}^m \pi_{ij} \|X_i-Y_j\|^2,
$$
where the minimizer is over
all matrices $\pi\in \bbR^{n\times m}$
with nonnegative entries, whose
rows add up to $1/m$, and whose
columns add up to $1/n$.
Define the Voronoi partition induced by $X_1,\ldots,X_n$ as 
\begin{align*}
V_j = \{x \in \bbR^d: \norm{x-X_j} \leq \norm{x-X_i} , \  \forall i\neq j\}, \quad j=1, \dots, n.
\end{align*} 
Then, we define the one-nearest neighbor estimator by
\begin{align*}
\widehat T_{nm}^{\mathrm{1NN}}(x) = \sum_{i=1}^n \sum_{j=1}^m (n\widehat\pi_{ij}) I(x \in V_i) Y_j,\quad x \in \Omega.
\end{align*}
Corollary~\ref{cor:empirical} then immediately yields an empirical error bound for $\widehat {T}_{nm}^{\mathrm{1NN}}$: 
\begin{align*}
    \frac{1}{n} \sum_{i=1}^n 
    \|\widehat T_{nm}^{\mathrm{1NN}}(X_i) - T_0(X_i)\|_2^2 &= \frac{1}{n} \sum_{i=1}^n 
    \Big\|\sum_{j=1}^m n\widehat{\pi}_{ij} Y_j - T_0(X_i)\Big\|_2^2 \\
    &\leq \sum_{i=1}^n \sum_{j=1}^m \widehat{\pi}_{ij} \|Y_j - T_0(X_i)\|_2^2 \\
    &\lesssim W_2^2(Q_m,Q)  + W_2^2(P_n,P) \\
    &=: e_1. 
\end{align*}
In order to upper bound the $L^2(P)$ risk of $T_{nm}^{\mathrm{1NN}}$ we need to relate its in-sample error to the $L^2(P)$ error. We do so via a different recipe from that followed in Section~\ref{sec:smooth}. Concretely, suppose for any $X$ we define $\text{nn}(X)$ to be the nearest neighbor of $X$ in the sample $X_1,\ldots,X_n$. Then we define the expected distance from $X$ to its nearest neighbor as:
\begin{align*}
    e_2 := \mathbb{E}_{X \sim P} \left[\|X - \text{nn}(X) \|_2^2 | X_1,\ldots,X_n\right].
\end{align*}
We also define the maximum mass of any Voronoi cell by:
\begin{align*}
e_3 := \max_{i \in \{1,\ldots,n\}} P(V_i).
\end{align*}
With these quantities in place we show the following general result on the accuracy of the nearest neighbor map:
\begin{lemma}
\label{lem:nn}
Let $P,Q\in \calP_2(\bbR^d)$,
and suppose that assumptions~\ref{ass:brenier_regularity},  \ref{ass:strong-convexity} and~\ref{ass:smoothness} hold for some $\alpha, \beta > 0$. Then, 
there exists a constant
$C > 0$ depending only
on $\beta$ such that 
\begin{align*}
\|\widehat{T}_{nm}^{\mathrm{1NN}} - T_0\|^2_{L^2(P)} &\leq  
C\big(e_1 \times ne_3  + e_2\big).
 \end{align*}
\end{lemma}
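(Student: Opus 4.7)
The plan is to decompose the $L^2(P)$ error over the Voronoi cells $V_i$, on each of which $\widehat T_{nm}^{\mathrm{1NN}}$ is constant, and then reduce the resulting sum either to the in-sample error (which is already controlled by Corollary~\ref{cor:empirical}) or to a nearest-neighbor distance term.

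Concretely, I would begin by writing
\[
\|\widehat{T}_{nm}^{\mathrm{1NN}} - T_0\|_{L^2(P)}^2
 = \sum_{i=1}^n \int_{V_i} \big\|\widehat{T}_{nm}^{\mathrm{1NN}}(X_i) - T_0(x)\big\|_2^2\, dP(x),
\]
since the estimator takes the constant value $\widehat{T}_{nm}^{\mathrm{1NN}}(X_i) = \sum_j n\hat\pi_{ij} Y_j$ on $V_i$. Applying the elementary inequality $\|a-c\|_2^2 \le 2\|a-b\|_2^2 + 2\|b-c\|_2^2$ with the intermediate point $b = T_0(X_i)$, and using that $T_0 = \nabla\varphi_0$ is $\beta$-Lipschitz by~\ref{ass:smoothness}, I would bound
\[
\int_{V_i} \|\widehat T(X_i) - T_0(x)\|_2^2 dP(x)
 \leq 2 P(V_i) \|\widehat T(X_i) - T_0(X_i)\|_2^2
 + 2\beta^2 \int_{V_i} \|x - X_i\|_2^2 dP(x).
\]
Summing over $i$, the second term contributes $2\beta^2 \int \|x - \mathrm{nn}(x)\|_2^2 dP(x) = 2\beta^2 e_2$ by definition of the Voronoi partition, which is the $e_2$ component of the lemma.

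For the first term, I would use the crude bound $\sum_i P(V_i) a_i \leq (\max_i P(V_i)) \sum_i a_i = e_3 \sum_i a_i$, turning it into $2 e_3 \cdot n \cdot \tfrac{1}{n}\sum_i \|\widehat T(X_i) - T_0(X_i)\|_2^2$. The in-sample average is precisely the quantity already bounded in the discussion preceding Lemma~\ref{lem:nn}: by Jensen's inequality (applied to the weights $n\hat\pi_{ij}$ which sum to one across $j$) and Corollary~\ref{cor:empirical},
\[
\frac{1}{n}\sum_{i=1}^n \|\widehat T_{nm}^{\mathrm{1NN}}(X_i) - T_0(X_i)\|_2^2
 \leq \sum_{i,j} \hat\pi_{ij}\|Y_j - T_0(X_i)\|_2^2
 = \mathbb{E}_{(X,Y)\sim\hat\pi_{nm}} \|Y - T_0(X)\|_2^2
 \lesssim e_1,
\]
with an implicit constant depending on $\alpha$ and $\beta$. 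Combining yields the stated bound.

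The only nontrivial step is the tradeoff encoded in the first term: the factor $n e_3$ measures how far $\max_i P(V_i)$ deviates from the average value $1/n$, and accepting this potentially large multiplier is what allows us to convert the in-sample empirical bound into an $L^2(P)$ bound without any assumption on the density of $P$. The rest of the argument is essentially mechanical, using only the Voronoi geometry, the Lipschitz property of $T_0$ from~\ref{ass:smoothness}, Jensen's inequality, and the previously established empirical stability bound.
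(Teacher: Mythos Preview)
Your proposal is correct and follows essentially the same route as the paper's proof: decompose over Voronoi cells, insert $T_0(X_i)$ via the elementary inequality, use the $\beta$-Lipschitz property of $T_0$ from~\ref{ass:smoothness} to produce the $e_2$ term, and bound the first sum by $e_3 \cdot n$ times the in-sample error already controlled via Jensen and Corollary~\ref{cor:empirical}. The paper's proof in Appendix~\ref{app:nn} is line-for-line the same argument, ending at the displayed bound and invoking the in-sample estimate established just before the lemma.
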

\noindent We prove this result in Appendix~\ref{app:nn}. To get some sense of the strength of this result we highlight that each of the terms in the above bound can be controlled under relatively mild moment assumptions on the distributions $P$ and $Q$. Concretely, we suppose that for some $r > 0$, to be specified in the sequel, we have that:
\begin{align}
\label{eqn:rthmoment}
    \int \|x\|_2^r dP, \int \|x\|_2^r dQ \leq M_r^r < \infty.
\end{align}
\begin{lemma}
    \label{lem:moments}
Let $d > 4$, and let the measures 
$P, Q\in\calP(\bbR^d)$ 
satisfy~\eqref{eqn:rthmoment} for $r > 4$, and suppose that $P$ is non-atomic. Then, it holds that, for implicit constants depending only on $M_r,d,r$:
    \begin{align}
        \mathbb{E}[e_1^2] &\lesssim \left[m^{-4/d} + n^{-4/d}\right] \nonumber \\
        \mathbb{E}[e_2] &\lesssim n^{-2/d} \label{eqn:claimone}\\
        \mathbb{E}[e_3^{2}] &\lesssim \left[\frac{\log n}{n}\right]^2.\label{eqn:claimtwo}
    \end{align}
\end{lemma}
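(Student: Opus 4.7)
Each of the three bounds will be handled separately, unified by a common tail-truncation strategy which is what allows the estimates to depend only on the moment constant $M_r$ rather than on support bounds. I will handle the two easier claims first, then devote most of the effort to the bound on $\mathbb{E}[e_3^2]$.

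For $\mathbb{E}[e_1^2]$, I will use $e_1^2 \leq 2W_2^4(Q_m,Q) + 2W_2^4(P_n,P)$ and invoke existing moment bounds for the empirical Wasserstein distance under polynomial moment assumptions. For $d > 4$ and $r > 4$, Theorem~1 of \cite{fournier2015}, strengthened to higher moments via the deviation inequality of their Theorem~2 (or more directly via the second-moment bounds of \cite{lei2020}), yields $\mathbb{E}[W_2^4(P_n,P)] \lesssim n^{-4/d}$ with an implicit constant depending only on $d,r,M_r$; the analogous bound for $Q_m$ follows by the same argument. For $\mathbb{E}[e_2]$, first note that by Fubini $\mathbb{E}[e_2] = \mathbb{E}[\|X - \text{nn}(X)\|_2^2]$ where the expectation is taken over both $X$ and the sample. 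I will then invoke the standard bound on the expected squared nearest-neighbor distance under a finite-moment condition, which gives $\mathbb{E}[\|X-\text{nn}(X)\|_2^2] \lesssim n^{-2/d}$ for $d \geq 3$ (this follows by truncating $X$ to a ball $B(0,R_n)$ of mass controlled via Markov's inequality, and covering this ball by a grid of side length $\sim n^{-1/d}$; points in cells that contain at least one sample have NN distance bounded by the grid scale, while empty cells are controlled via a standard counting argument).

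The main obstacle is the proof of the third claim, $\mathbb{E}[e_3^2] \lesssim (\log n / n)^2$, because the Voronoi cells $V_i$ depend on the sample and the class of $n$-point Voronoi cells has VC dimension growing with $n$, so naive uniform concentration is not available. The approach is as follows. First, set a truncation radius $R_n \asymp n^{1/r}$; by Markov's inequality and a union bound, with probability at least $1 - O(n^{-1})$ we have $X_1,\dots,X_n \in B(0,R_n)$ and simultaneously $P(B(0,R_n)^\cp) \lesssim n^{-1}$. Second, use the non-atomicity of $P$ to partition $B(0,R_n)$ into a family of $M \asymp n / \log n$ cells $\{C_k\}$ each of $P$-mass at most $c \log n / n$. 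Third, for any Voronoi cell $V_i$, note that $V_i$ contains the single sample point $X_i$ (almost surely), so if $V_i \cap B(0,R_n)$ carried $P$-mass exceeding $C\log n / n$ it would have to cover an entire ``chain'' of $C_k$'s, none of whose interiors contain another sample $X_j$. A (reverse) multiplicative Chernoff bound applied to each fixed $C_k$ shows that the probability any specific cell is empty of samples is at most $n^{-2}$ (for $c$ chosen appropriately), so by a union bound over the $M$ cells, the event that every $C_k$ contains a sample holds with probability at least $1 - O(1/n)$. On this event, combined with the truncation event, $e_3 \lesssim \log n/n$; on the complement (probability $\lesssim 1/n$), use the trivial bound $e_3 \leq 1$. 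Combining via the inequality $\mathbb{E}[e_3^2] \leq (C\log n/n)^2 + \mathbb{P}(\text{bad event})$ gives the claimed rate.

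The delicate step is the third one: turning the sample-dependent geometry of Voronoi cells into a uniform statement about the grid $\{C_k\}$, which is what lets us circumvent the VC-dimension obstruction. If a cleaner resolution is needed, one can alternatively couple the problem to a Poissonized point process and exploit that, under Poissonization, the masses $\{P(V_i)\}$ have an exponential-type tail, from which the $\log n / n$ bound on the maximum follows by a direct union bound and de-Poissonization.
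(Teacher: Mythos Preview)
Your treatment of the first two claims is fine and broadly parallel to the paper's. For $\mathbb{E}[e_1^2]$ the paper simply cites \cite{fournier2015}; for $\mathbb{E}[e_2]$ the paper uses a dyadic shell decomposition $S_j = B_{0,2^j}\setminus B_{0,2^{j-1}}$ combined with the tail identity $\mathbb{P}(\|x-\text{nn}(x)\|\geq\sqrt t)=(1-P(B_{x,\sqrt t}))^n$ and a covering argument on each shell, which is a slightly cleaner execution of the truncation idea you sketch.

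The gap is in your argument for $\mathbb{E}[e_3^2]$. The implication you rely on---that if every cell $C_k$ contains a sample then $P(V_i)\lesssim \log n/n$---does not follow. A Voronoi cell $V_i$ contains only the single sample $X_i$, so it cannot \emph{contain} any other $C_k$ entirely; but it can still intersect arbitrarily many $C_k$'s partially, and those partial intersections can add up to large $P$-mass. Since your cells $C_k$ are defined by mass and carry no diameter control, knowing that each $C_k$ contains a sample gives you no bound on the spread (or mass) of $V_i$. Your fallback suggestion of Poissonization does not resolve this either: the exponential tail for Voronoi cell volumes holds for \emph{homogeneous} Poisson processes, not for a process with intensity $nP$ when $P$ is an arbitrary non-atomic measure.

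The paper's proof replaces your grid with two ingredients that together close this gap. First, a VC-type uniform bound over \emph{balls}: with probability at least $1-\delta$, every Euclidean ball $B$ with $P(B)\gtrsim \log(n/\delta)/n$ contains a sample point. Second, Lemma~6.2 of \cite{gyorfi2006}: for non-atomic $P$, the set $S_a(x):=\{y:P(B_{y,\|x-y\|})\leq a\}$ satisfies $P(S_a(x))\lesssim a$ for every $x$, with implicit constant depending only on $d$. The two combine as follows: if $y\in V_i$, then $B_{y,\|y-X_i\|}$ contains no sample other than $X_i$, so on the high-probability event $P(B_{y,\|y-X_i\|})\lesssim \log(n/\delta)/n$, i.e.\ $y\in S_a(X_i)$ with $a\asymp\log(n/\delta)/n$. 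Hence $V_i\subseteq S_a(X_i)$ and $P(V_i)\lesssim a$. The Gy\"orfi lemma is the non-obvious geometric fact (proved via a cone decomposition at $x$) that your partition argument is missing.
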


\noindent The first claim above follows directly from past work (e.g. \citet{fournier2015}). We prove the remaining claims in Appendix~\ref{app:momentslemma}. The restriction that $d > 4$ is not essential to our result, but different rates of convergence are obtained for $e_1$ and $e_2$ when $d \leq 4$. We do not aim to optimize the moment cutoff $r > 4$, but instead aim for concise and illustrative results and proofs. Under these mild conditions on the measures $P$ and $Q$ we obtain quantitative rates of convergence for a practical estimator of the transport map:
\begin{corollary}
\label{cor:nn}
Let $d > 4$, and let the measures
$P,Q\in \calP(\bbR^d)$ satisfy
condition~\eqref{eqn:rthmoment}. 
Suppose $P$ is non-atomic. 
Assume further that there exists
a Brenier potential $\varphi_0$
from $P$ to $Q$ satisfying conditions
\ref{ass:brenier_regularity}, \ref{ass:strong-convexity}, and \ref{ass:smoothness}. Then, 
there exists a constant $C > 0$
depending on $M_r,\alpha,\beta,d,r$
such that 
$$R(\hat T_{nm}^{\mathrm{1NN}}, T_0)
\leq C \big(n^{-2/d}+ m^{-2/d}\big) \log n.$$
\end{corollary}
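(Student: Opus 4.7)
The plan is to take expectations in Lemma~\ref{lem:nn} and then combine the three moment bounds of Lemma~\ref{lem:moments} to conclude. Writing
\[
R(\hat T_{nm}^{\mathrm{1NN}}, T_0)
= \mathbb{E}\bigl[\|\hat T_{nm}^{\mathrm{1NN}} - T_0\|_{L^2(P)}^2\bigr]
\leq C\bigl(\mathbb{E}[n\, e_1 e_3] + \mathbb{E}[e_2]\bigr),
\]
reduces the question to controlling the cross term $\mathbb{E}[n e_1 e_3]$, since $\mathbb{E}[e_2]\lesssim n^{-2/d}$ is already of the desired form by~\eqref{eqn:claimone}.

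For the cross term, I would apply Cauchy--Schwarz to decouple $e_1$ (which depends on both samples through the Wasserstein distances $W_2^2(P_n,P)$ and $W_2^2(Q_m,Q)$) from $e_3$ (which depends only on $X_1,\dots,X_n$ through the maximum Voronoi mass). This gives
\[
\mathbb{E}[n\, e_1 e_3] \leq n\, \bigl(\mathbb{E}[e_1^2]\bigr)^{1/2}\bigl(\mathbb{E}[e_3^2]\bigr)^{1/2}
\lesssim n\cdot \bigl(m^{-4/d} + n^{-4/d}\bigr)^{1/2}\cdot \frac{\log n}{n},
\]
where in the last step I plug in the bounds $\mathbb{E}[e_1^2] \lesssim m^{-4/d}+n^{-4/d}$ and $\mathbb{E}[e_3^2]\lesssim (\log n / n)^2$ from Lemma~\ref{lem:moments}. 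Using $\sqrt{a+b}\leq \sqrt{a}+\sqrt{b}$ and collecting terms yields
\[
\mathbb{E}[n\, e_1 e_3] \lesssim (n^{-2/d} + m^{-2/d})\log n.
\]

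Combining this with the bound on $\mathbb{E}[e_2]$ and absorbing the lower-order $n^{-2/d}$ term into $(n^{-2/d}+m^{-2/d})\log n$ gives the claimed rate. The implicit constant aggregates the constant from Lemma~\ref{lem:nn} (depending on $\beta$) with those from Lemma~\ref{lem:moments} (depending on $M_r,d,r$, and through the stability bound on $\alpha$), matching the stated dependence $C=C(M_r,\alpha,\beta,d,r)$.

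Honestly, there is no substantive obstacle here: all the work has been done in Lemma~\ref{lem:nn} (which rests on Corollary~\ref{cor:empirical}) and Lemma~\ref{lem:moments}, and this proof is just a Cauchy--Schwarz bookkeeping step. The only minor subtlety is that $e_1$ and $e_3$ are not independent, so one cannot split the expectation as a product; Cauchy--Schwarz is precisely what lets us pay only a factor matching the two second moments. It is also worth noting that the $\log n$ factor in the final bound enters exclusively through the maximum-Voronoi-mass term $e_3$, which suggests this factor may be improvable by a more refined $L^2(P)$-to-empirical norm argument, but for the statement as given, the Cauchy--Schwarz route is the most direct.
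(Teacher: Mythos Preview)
Your proof is correct and matches the paper's approach exactly: the paper states that Corollary~\ref{cor:nn} ``follows directly by combining Lemmas~\ref{lem:nn} and~\ref{lem:moments}, and an application of the Cauchy--Schwarz inequality,'' which is precisely the argument you wrote out. Your additional remarks about the non-independence of $e_1$ and $e_3$ and the source of the $\log n$ factor are accurate and go slightly beyond what the paper spells out.
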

\noindent Corollary~\ref{cor:nn} follows directly by combining Lemmas~\ref{lem:nn} and~\ref{lem:moments}, and an application of the Cauchy-Schwarz inequality. 
This result
shows that,
up to a logarithmic factor,  the nearest neighbor
estimator achieves the
 convergence rate $(n\wedge m)^{-2/d}$
for estimating a bi-Lipschitz optimal
transport map $T_0$,
and is therefore minimax optimal~\citep{hutter2021}. Our result merely
assumes that the measures admit $4+\epsilon$
moments for some $\epsilon > 0$;
this stands in 
contrast to past analyses of 
OT map estimation,
which
either
assume that $P$
is compactly-supported~(e.g. \cite{gunsilius2021,manole2024plugin}),
has exponential  
tails~\citep{deb2021,divol2022a}, 
or   yield
suboptimal convergence rates
when $P$ has finitely many
moments~\citep{ding2024}. 

\vspace{.2cm}

\noindent {\bf Transport between Log-Smooth and Strongly Log-Concave Distributions: } 
As an illustration of Corollary~\ref{cor:nn}, suppose we consider the case when $P$ and $Q$, are log-smooth and strongly log-concave. Concretely, $P$ and $Q$ are supported on $\mathbb{R}^d$ and admit Lebesgue densities of the form $P = \exp(-V)$ and $Q = \exp(-W)$, where $V$ and $W$ are twice-differentiable, and satisfy for all $x \in \mathbb{R}^d$: 
\begin{equation}\label{eq:log_concave} \alpha_V I \preceq \nabla^2 V(x) \preceq \beta_V I,\quad \text{and,}\quad \alpha_W I \preceq \nabla^2 W(x) \preceq \beta_W I.
\end{equation}

Under these assumptions, Caffarelli's contraction theorem~\citep{caffarelli2000}  implies that our regularity assumptions~\ref{ass:brenier_regularity}, \ref{ass:strong-convexity} and~\ref{ass:smoothness} hold for $\Omega = \mathbb{R}^d$, with $\alpha = \sqrt{\alpha_V/\beta_W}$ and $\beta = \sqrt{\beta_V/\alpha_W}$. Furthermore, the tails of a log-concave distribution are sub-exponential~\citep{ledoux2005concentration}, and these distributions satisfy the moment conditions of Lemma~\ref{lem:moments}. We obtain as a direct consequence of Corollary~\ref{cor:nn} the following result for the nearest-neighbor estimator:
\begin{corollary}
Let $P=\exp(-V)$ and $Q=\exp(-W)$ 
be log-concave measures with potentials satisfying~\eqref{eq:log_concave}.
Then, there exists a constant $C > 0$ depending only on $\alpha_V,\beta_V,\alpha_W,\beta_W$ such that
$$R(\hat T_{nm}^{\mathrm{1NN}}, T_0)
\leq C \big(n^{-2/d} + m^{-2/d}\big) \log n.$$
\end{corollary}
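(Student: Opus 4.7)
The plan is to verify that the hypotheses of Corollary~\ref{cor:nn} are satisfied under the log-concavity/log-smoothness assumptions in~\eqref{eq:log_concave}, after which the conclusion is immediate. This amounts to three verifications: the Brenier regularity/curvature conditions \ref{ass:brenier_regularity}, \ref{ass:strong-convexity}, \ref{ass:smoothness}; the moment condition~\eqref{eqn:rthmoment} for some $r > 4$; and the non-atomicity of $P$.

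First I would invoke Caffarelli's contraction theorem. Since $Q = \exp(-W)$ with $\alpha_W I \preceq \nabla^2 W \preceq \beta_W I$, the measure $Q$ is strongly log-concave, and similarly for $P$. Caffarelli's theorem (as cited in the excerpt) then guarantees that the Brenier potential $\varphi_0$ from $P$ to $Q$ is defined on $\Omega = \bbR^d$ and has a Hessian satisfying quantitative two-sided bounds. The cleanest way to extract the specific constants is to note that the Brenier map between two strongly log-concave distributions is bi-Lipschitz with Lipschitz constants $\sqrt{\beta_V/\alpha_W}$ and $\sqrt{\beta_W/\alpha_V}$ for the forward and inverse maps respectively. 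This yields \ref{ass:brenier_regularity} along with \ref{ass:strong-convexity} and \ref{ass:smoothness} with $\alpha = \sqrt{\alpha_V/\beta_W}$ and $\beta = \sqrt{\beta_V/\alpha_W}$, exactly as stated in the paragraph preceding the corollary.

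Next I would verify the moment hypothesis. Strongly log-concave measures are sub-Gaussian (or at least sub-exponential, per the cited Ledoux reference), so in particular
\[
\int \|x\|_2^r dP(x) + \int \|x\|_2^r dQ(x) < \infty
\]
for every $r > 0$. Choosing for instance $r = 5 > 4$ yields a finite $M_r$ depending only on $\alpha_V,\beta_V,\alpha_W,\beta_W$, so \eqref{eqn:rthmoment} holds with this $M_r$. Non-atomicity of $P$ is immediate since $P$ admits a Lebesgue density $\exp(-V)$.

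Finally, all hypotheses of Corollary~\ref{cor:nn} are met, with $\alpha$, $\beta$, and $M_r$ each controlled by the four parameters $\alpha_V,\beta_V,\alpha_W,\beta_W$, so the Corollary provides the bound
\[
R(\hat T_{nm}^{\mathrm{1NN}},T_0) \leq C\bigl(n^{-2/d}+m^{-2/d}\bigr)\log n
\]
with $C$ depending only on these four parameters, as claimed. There is essentially no hard step here: the entire content is in identifying and citing Caffarelli's contraction theorem with the correct quantitative constants; the moment and non-atomicity conditions are essentially free from log-concavity and absolute continuity respectively.
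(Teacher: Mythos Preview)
Your proposal is correct and follows essentially the same approach as the paper: verify the hypotheses of Corollary~\ref{cor:nn} via Caffarelli's contraction theorem (for~\ref{ass:brenier_regularity}--\ref{ass:smoothness} with the stated constants), sub-exponential tails of log-concave measures (for the moment condition~\eqref{eqn:rthmoment}), and absolute continuity (for non-atomicity), then apply the corollary. The paper's argument, contained in the paragraph immediately preceding the statement, is exactly this.
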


\noindent This result improves (by some logarithmic factors) the result of \citet{divol2022a}. 
More importantly, and in contrast to past work, our estimator of the transport map between two log-smooth and log-strongly concave distributions is simple and practical; in particular, 
it is free of tuning parameters.
 
\section{Discussion}
\label{sec:discussion}

Our results close an important gap in the literature on estimating smooth optimal transport maps
via the plugin principle. 
The main contribution
of our work was to 
derive a new two-sample stability bound in Theorem~\ref{thm:stability},
which does not rely on any regularity properties of the fitted optimal transport map.
Stability bounds of this form are useful as they reduce the study of plugin transport map estimators to that of distribution estimation under the Wasserstein loss. The latter is a well-studied problem with sharp results available for smoothness classes and under mild moment conditions. There are, however other arguments that one might be able to exploit to more directly upper bound the semi-dual difference, and we expect this to be a fruitful direction for future investigation.

We have centered our discussion around
two broad classes of OT map estimators:
plugin estimators, and dual estimators.
To our knowledge, 
these are the only methods which are known to be minimax optimal 
over typical classes of
smooth optimal transport maps. 
Quantitative convergence
rates have also been derived
for several other methods, 
based on
entropic  OT with vanishing regularization~\citep{pooladian2021,divol2024tight,eckstein2024convergence,mordant2024},
and sum-of-squares relaxations~\citep{vacher2021dimension,muzellec2021}. 
Although these methods 
are not known to be minimax optimal
in the same level of generality
as plugin or dual estimators, 
 they typically enjoy 
more favorable computational properties.

Our results relied
on smoothness and strong convexity
assumptions on the underlying Brenier potentials. These assumptions can be relaxed in various ways to study the estimation of nonsmooth optimal transport maps. This question has received some attention in the case when one of the measures $P$ and $Q$ is
discrete~\citep{pooladian2023minimax,sadhu2024approximation,sadhu2024stability}.

We have limited our attention
to the $L^2(P)$ risk of OT
map estimators, due to its close
connection with the optimality
property of the population OT map. 
It is nevertheless of interest
to understand the risk of OT map
estimators under other loss functions; 
for example, $L^\infty$ convergence
results are useful
in the study of Monge-Kantorovich
ranks and quantiles~\citep{hallin2021,ghosal2022}.
The recent works of~\cite{manole2023} and \cite{gonzalez2024} 
have established quantitative
stability bounds for the uniform
risk of plugin estimators, 
however they require strong
smoothness and boundary conditions on the
underlying distributions. It would be interesting to explore if the ideas from our work, which relaxes these types of regularity assumptions in $L^2(P)$ estimation, have any implications in more challenging settings.

\section*{Acknowledgements} 
The authors are grateful to Jonathan Niles-Weed and Larry Wasserman for numerous helpful conversations about stability bounds, and to Shayan Hundrieser for discussions related to this work. The work of SB was supported by the NSF
grant DMS-2310632. This work was done in part while SB was visiting the Simons Institute for the Theory of Computing.
The work of TM
is funded by a Norbert Wiener postdoctoral fellowship.

\bibliographystyle{abbrvnat}
\bibliography{stability}

\begin{thebibliography}{63}
\providecommand{\natexlab}[1]{#1}
\providecommand{\url}[1]{\texttt{#1}}
\expandafter\ifx\csname urlstyle\endcsname\relax
  \providecommand{\doi}[1]{doi: #1}\else
  \providecommand{\doi}{doi: \begingroup \urlstyle{rm}\Url}\fi

\bibitem[Ambrosio et~al.(2019)Ambrosio, Glaudo, and Trevisan]{ambrosio2019}
L.~Ambrosio, F.~Glaudo, and D.~Trevisan.
\newblock On the optimal map in the $2$-dimensional random matching problem.
\newblock \emph{Discrete and Continuous Dynamical Systems}, 39\penalty0
  (12):\penalty0 7291--7308, 2019.

\bibitem[Balakrishnan et~al.(2013)Balakrishnan, Narayanan, Rinaldo, Singh, and
  Wasserman]{balakrishnan2013}
S.~Balakrishnan, S.~Narayanan, A.~Rinaldo, A.~Singh, and L.~Wasserman.
\newblock Cluster trees on manifolds.
\newblock In \emph{Advances in Neural Information Processing Systems}, 2013.

\bibitem[Berman(2021)]{berman2021convergence}
R.~J. Berman.
\newblock Convergence rates for discretized {{Monge--Amp{\`e}re}} equations and
  quantitative stability of optimal transport.
\newblock \emph{Foundations of Computational Mathematics}, 21\penalty0
  (4):\penalty0 1099--1140, 2021.

\bibitem[Bobkov and Ledoux(2019)]{bobkov2019}
S.~Bobkov and M.~Ledoux.
\newblock One-dimensional empirical measures, order statistics, and
  {{Kantorovich}} transport distances.
\newblock \emph{Memoirs of the American Mathematical Society}, 261, 2019.

\bibitem[Boissard and Le~Gouic(2014)]{boissard2014a}
E.~Boissard and T.~Le~Gouic.
\newblock On the mean speed of convergence of empirical and occupation measures
  in {{Wasserstein}} distance.
\newblock \emph{Annales de l'{{Institut Henri Poincar\'e--Probabilit\'es}} et
  Statistiques}, 50:\penalty0 539--563, 2014.

\bibitem[Caffarelli(1996)]{caffarelli1996}
L.~A. Caffarelli.
\newblock Boundary {{Regularity}} of {{Maps}} with {{Convex
  Potentials}}--{{II}}.
\newblock \emph{Annals of Mathematics}, 144:\penalty0 453--496, 1996.

\bibitem[Caffarelli(2000)]{caffarelli2000}
L.~A. Caffarelli.
\newblock Monotonicity {{Properties}} of {{Optimal Transportation}} and the
  {{FKG}} and {{Related Inequalities}}.
\newblock \emph{Communications in Mathematical Physics}, 214:\penalty0
  547--563, 2000.

\bibitem[Cai et~al.(2020)Cai, Cheng, Craig, and Craig]{cai2020linearized}
T.~Cai, J.~Cheng, N.~Craig, and K.~Craig.
\newblock Linearized optimal transport for collider events.
\newblock \emph{Physical Review D}, 102\penalty0 (11):\penalty0 116019, 2020.

\bibitem[Chaudhuri and Dasgupta(2010)]{chaudhuri2010}
K.~Chaudhuri and S.~Dasgupta.
\newblock Rates of convergence for the cluster tree.
\newblock In \emph{Advances in {{Neural Information Processing Systems}} 24},
  2010.

\bibitem[Chernozhukov et~al.(2017)Chernozhukov, Galichon, Hallin, and
  Henry]{chernozhukov2017}
V.~Chernozhukov, A.~Galichon, M.~Hallin, and M.~Henry.
\newblock Monge\textendash{{Kantorovich}} depth, quantiles, ranks and signs.
\newblock \emph{The Annals of Statistics}, 45:\penalty0 223--256, 2017.

\bibitem[Chewi et~al.(2024)Chewi, Niles-Weed, and Rigollet]{chewi2024}
S.~Chewi, J.~Niles-Weed, and P.~Rigollet.
\newblock Statistical optimal transport.
\newblock \emph{Ecole d'Et\'{e} de Probabilit\'{e}s de Saint-Flour XLIX.
  arXiv:2407.18163}, 2024.

\bibitem[Deb and Sen(2023)]{deb2021a}
N.~Deb and B.~Sen.
\newblock Multivariate {{Rank}}-{{Based Distribution}}-{{Free Nonparametric
  Testing Using Measure Transportation}}.
\newblock \emph{Journal of the American Statistical Association}, 118:\penalty0
  192--207, 2023.

\bibitem[Deb et~al.(2021)Deb, Ghosal, and Sen]{deb2021}
N.~Deb, P.~Ghosal, and B.~Sen.
\newblock Rates of {{Estimation}} of {{Optimal Transport Maps}} using
  {{Plug}}-in {{Estimators}} via {{Barycentric Projections}}.
\newblock \emph{Advances in Neural Information Processing Systems 34}, 2021.

\bibitem[Delalande and Merigot(2023)]{delalande2023}
A.~Delalande and Q.~Merigot.
\newblock Quantitative stability of optimal transport maps under variations of
  the target measure.
\newblock \emph{Duke Mathematical Journal}, 172\penalty0 (17):\penalty0
  3321--3357, 2023.

\bibitem[Ding et~al.(2024)Ding, Li, and Xue]{ding2024}
Y.~Ding, R.~Li, and L.~Xue.
\newblock Statistical convergence rates of optimal transport map estimation
  between general distributions.
\newblock \emph{arXiv preprint arXiv:2412.08064}, 2024.

\bibitem[Divol(2022)]{divol2022}
V.~Divol.
\newblock Measure estimation on manifolds: An optimal transport approach.
\newblock \emph{Probability Theory and Related Fields}, 183\penalty0
  (1):\penalty0 581--647, 2022.

\bibitem[Divol et~al.(2022)Divol, {Niles-Weed}, and Pooladian]{divol2022a}
V.~Divol, J.~{Niles-Weed}, and A.-A. Pooladian.
\newblock Optimal transport map estimation in general function spaces.
\newblock \emph{The Annals of Statistics (To appear). arXiv preprint
  arXiv:2212.03722}, 2022.

\bibitem[Divol et~al.(2024)Divol, Niles-Weed, and Pooladian]{divol2024tight}
V.~Divol, J.~Niles-Weed, and A.-A. Pooladian.
\newblock Tight stability bounds for entropic brenier maps.
\newblock \emph{arXiv preprint arXiv:2404.02855}, 2024.

\bibitem[Eckstein and Nutz(2024)]{eckstein2024convergence}
S.~Eckstein and M.~Nutz.
\newblock Convergence rates for regularized optimal transport via quantization.
\newblock \emph{Mathematics of Operations Research}, 49\penalty0 (2):\penalty0
  1223--1240, 2024.

\bibitem[Fournier and Guillin(2015)]{fournier2015}
N.~Fournier and A.~Guillin.
\newblock On the rate of convergence in {{Wasserstein}} distance of the
  empirical measure.
\newblock \emph{Probability Theory and Related Fields}, 162:\penalty0 707--738,
  2015.

\bibitem[Gallou{\"e}t et~al.(2022)Gallou{\"e}t, M{\'e}rigot, and
  Thibert]{gallouet2022strong}
A.~Gallou{\"e}t, Q.~M{\'e}rigot, and B.~Thibert.
\newblock Strong $c$-concavity and stability in optimal transport.
\newblock \emph{arXiv preprint arXiv:2207.11042}, 2022.

\bibitem[Ghosal and Sen(2022)]{ghosal2022}
P.~Ghosal and B.~Sen.
\newblock Multivariate {{Ranks}} and {{Quantiles}} using {{Optimal Transport}}:
  {{Consistency}}, {{Rates}}, and {{Nonparametric Testing}}.
\newblock \emph{The Annals of Statistics}, 50:\penalty0 1012--4037, 2022.

\bibitem[Gigli(2011)]{gigli2011}
N.~Gigli.
\newblock On {{H\"older}} continuity-in-time of the optimal transport map
  towards measures along a curve.
\newblock \emph{Proceedings of the Edinburgh Mathematical Society},
  54:\penalty0 401--409, 2011.

\bibitem[Gilbarg and Trudinger(2001)]{gilbarg2001}
D.~Gilbarg and N.~S. Trudinger.
\newblock \emph{Elliptic {{Partial Differential Equations}} of {{Second
  Order}}}.
\newblock Classics in {{Mathematics}}. {Springer-Verlag}, {Berlin Heidelberg},
  2 edition, 2001.

\bibitem[Gin{\'e} and Nickl(2009)]{gine2009}
E.~Gin{\'e} and R.~Nickl.
\newblock {Uniform limit theorems for wavelet density estimators}.
\newblock \emph{The Annals of Probability}, 37\penalty0 (4):\penalty0
  1605--1646, 2009.

\bibitem[Gonz{\'a}lez-Sanz and Sheng(2024)]{gonzalez2024}
A.~Gonz{\'a}lez-Sanz and S.~Sheng.
\newblock Linearization of {{Monge-Amp\`ere}} equations and data science
  applications.
\newblock \emph{arXiv preprint arXiv:2408.06534}, 2024.

\bibitem[Gunsilius(2022)]{gunsilius2021}
F.~F. Gunsilius.
\newblock On the convergence rate of potentials of {{Brenier}} maps.
\newblock \emph{Econometric Theory}, 38\penalty0 (2):\penalty0 381--417, 2022.

\bibitem[Gy{\"o}rfi et~al.(2006)Gy{\"o}rfi, Kohler, Krzyzak, and
  Walk]{gyorfi2006}
L.~Gy{\"o}rfi, M.~Kohler, A.~Krzyzak, and H.~Walk.
\newblock \emph{A Distribution-Free Theory of Nonparametric Regression}.
\newblock {Springer Science \& Business Media}, 2006.

\bibitem[Hallin et~al.(2021)Hallin, del Barrio, {Cuesta-Albertos}, and
  Matr{\'a}n]{hallin2021}
M.~Hallin, E.~del Barrio, J.~{Cuesta-Albertos}, and C.~Matr{\'a}n.
\newblock Distribution and quantile functions, ranks and signs in dimension d:
  {{A}} measure transportation approach.
\newblock \emph{The Annals of Statistics}, 49:\penalty0 1139--1165, 2021.

\bibitem[Hiriart-Urruty and Lemar{\'e}chal(1993)]{urruty1993}
J.-B. Hiriart-Urruty and C.~Lemar{\'e}chal.
\newblock \emph{Convex analysis and minimization algorithms II: Advanced theory
  and bundle methods}.
\newblock Springer-Verlag, 1993.

\bibitem[{Hiriart-Urruty} and Lemar{\'e}chal(2004)]{hiriart-urruty2004}
J.-B. {Hiriart-Urruty} and C.~Lemar{\'e}chal.
\newblock \emph{Fundamentals of Convex Analysis}.
\newblock {Springer Science \& Business Media}, 2004.

\bibitem[H{\"u}tter and Rigollet(2021)]{hutter2021}
J.-C. H{\"u}tter and P.~Rigollet.
\newblock Minimax rates of estimation for smooth optimal transport maps.
\newblock \emph{The Annals of Statistics}, 49:\penalty0 1166--1194, 2021.

\bibitem[Kantorovich(1942)]{kantorovich1942}
L.~V. Kantorovich.
\newblock On the translocation of masses.
\newblock In \emph{Dokl. {{Akad}}. {{Nauk}}. {{USSR}} ({{NS}})}, volume~37,
  pages 199--201, 1942.

\bibitem[Kantorovich(1948)]{kantorovich1948}
L.~V. Kantorovich.
\newblock On a problem of {{Monge}}.
\newblock In \emph{{{CR}} ({{Doklady}}) {{Acad}}. {{Sci}}. {{URSS}} ({{NS}})},
  volume~3, pages 225--226, 1948.

\bibitem[Ledoux(2005)]{ledoux2005concentration}
M.~Ledoux.
\newblock \emph{The concentration of measure phenomenon}, volume~89 of
  \emph{Mathematical Surveys and Monographs}.
\newblock American Mathematical Society, 2005.

\bibitem[Lei(2020)]{lei2020}
J.~Lei.
\newblock Convergence and concentration of empirical measures under
  {{Wasserstein}} distance in unbounded functional spaces.
\newblock \emph{Bernoulli}, 26:\penalty0 767--798, 2020.

\bibitem[Letrouit and M{\'e}rigot(2024)]{letrouit2024}
C.~Letrouit and Q.~M{\'e}rigot.
\newblock Gluing methods for quantitative stability of optimal transport maps.
\newblock \emph{arXiv preprint arXiv:2411.04908}, 2024.

\bibitem[Li and Nochetto(2021)]{li2021quantitative}
W.~Li and R.~H. Nochetto.
\newblock Quantitative stability and error estimates for optimal transport
  plans.
\newblock \emph{IMA Journal of Numerical Analysis}, 41\penalty0 (3):\penalty0
  1941--1965, 2021.

\bibitem[Makkuva et~al.(2020)Makkuva, Taghvaei, Oh, and Lee]{makkuva2020}
A.~Makkuva, A.~Taghvaei, S.~Oh, and J.~Lee.
\newblock Optimal transport mapping via input convex neural networks.
\newblock In \emph{International {{Conference}} on {{Machine Learning}}}, pages
  6672--6681. {PMLR}, 2020.

\bibitem[Manole et~al.(2023)Manole, Balakrishnan, Niles-Weed, and
  Wasserman]{manole2023}
T.~Manole, S.~Balakrishnan, J.~Niles-Weed, and L.~Wasserman.
\newblock Central limit theorems for smooth optimal transport maps.
\newblock \emph{arXiv preprint arXiv:2312.12407}, 2023.

\bibitem[Manole et~al.(2024)Manole, Balakrishnan, Niles-Weed, and
  Wasserman]{manole2024plugin}
T.~Manole, S.~Balakrishnan, J.~Niles-Weed, and L.~Wasserman.
\newblock Plugin estimation of smooth optimal transport maps.
\newblock \emph{The Annals of Statistics}, 52\penalty0 (3):\penalty0 966--998,
  2024.

\bibitem[M{\'e}rigot et~al.(2020)M{\'e}rigot, Delalande, and
  Chazal]{merigot2020}
Q.~M{\'e}rigot, A.~Delalande, and F.~Chazal.
\newblock Quantitative stability of optimal transport maps and linearization of
  the 2-{W}asserstein space.
\newblock In \emph{International Conference on Artificial Intelligence and
  Statistics}, pages 3186--3196. PMLR, 2020.

\bibitem[Monge(1781)]{monge1781}
G.~Monge.
\newblock M\'emoire sur la th\'eorie des d\'eblais et des remblais.
\newblock \emph{Histoire de l'Acad\'emie Royale des Sciences de Paris}, 1781.

\bibitem[Mordant(2024)]{mordant2024}
G.~Mordant.
\newblock The entropic optimal (self-)transport problem: {{Limit}}
  distributions for decreasing regularization with application to score
  function estimation.
\newblock \emph{arXiv preprint arXiv:2412.12007}, 2024.

\bibitem[Muzellec et~al.(2021)Muzellec, Vacher, Bach, Vialard, and
  Rudi]{muzellec2021}
B.~Muzellec, A.~Vacher, F.~Bach, F.-X. Vialard, and A.~Rudi.
\newblock Near-optimal estimation of smooth transport maps with kernel
  sums-of-squares.
\newblock \emph{arXiv preprint arXiv:2112.01907}, 2021.

\bibitem[Niles-Weed and Berthet(2022)]{weed2019a}
J.~Niles-Weed and Q.~Berthet.
\newblock Minimax estimation of smooth densities in wasserstein distance.
\newblock \emph{The Annals of Statistics}, 50\penalty0 (3):\penalty0
  1519--1540, 2022.

\bibitem[Panaretos and Zemel(2020)]{panaretos2020}
V.~M. Panaretos and Y.~Zemel.
\newblock \emph{An invitation to statistics in Wasserstein space}.
\newblock Springer Nature, 2020.

\bibitem[Peyre(2018)]{peyre2018}
R.~Peyre.
\newblock Comparison between ${{W}}_2$ distance and $\dot {{H}}^{-1}$ norm, and
  localisation of {{Wasserstein}} distance.
\newblock \emph{ESAIM: Control, Optimisation and Calculus of Variations},
  24:\penalty0 1489--1501, 2018.

\bibitem[Pooladian and {Niles-Weed}(2021)]{pooladian2021}
A.-A. Pooladian and J.~{Niles-Weed}.
\newblock Entropic estimation of optimal transport maps.
\newblock \emph{arXiv preprint arXiv:2109.12004}, 2021.

\bibitem[Pooladian et~al.(2023)Pooladian, Divol, and
  Niles-Weed]{pooladian2023minimax}
A.-A. Pooladian, V.~Divol, and J.~Niles-Weed.
\newblock Minimax estimation of discontinuous optimal transport maps: The
  semi-discrete case.
\newblock In \emph{International Conference on Machine Learning}, pages
  28128--28150. PMLR, 2023.

\bibitem[Radulovi\'c and Wegkamp(2003)]{radulovic2003}
D.~Radulovi\'c and M.~Wegkamp.
\newblock Necessary and sufficient conditions for weak convergence of smoothed
  empirical processes.
\newblock \emph{Statistics and Probability Letters}, 61\penalty0 (3):\penalty0
  321--336, 2003.

\bibitem[Sadhu et~al.(2024{\natexlab{a}})Sadhu, Goldfeld, and
  Kato]{sadhu2024approximation}
R.~Sadhu, Z.~Goldfeld, and K.~Kato.
\newblock Approximation rates of entropic maps in semidiscrete optimal
  transport.
\newblock \emph{arXiv preprint arXiv:2411.07947}, 2024{\natexlab{a}}.

\bibitem[Sadhu et~al.(2024{\natexlab{b}})Sadhu, Goldfeld, and
  Kato]{sadhu2024stability}
R.~Sadhu, Z.~Goldfeld, and K.~Kato.
\newblock Stability and statistical inference for semidiscrete optimal
  transport maps.
\newblock \emph{The Annals of Applied Probability}, 34\penalty0 (6):\penalty0
  5694--5736, 2024{\natexlab{b}}.

\bibitem[Santambrogio(2015)]{santambrogio2015}
F.~Santambrogio.
\newblock \emph{Optimal {{Transport}} for {{Applied Mathematicians}}:
  {{Calculus}} of {{Variations}}, {{PDEs}}, and {{Modeling}}}, volume~87.
\newblock {Birkh\"auser}, 2015.

\bibitem[Segers(2022)]{segers2022}
J.~Segers.
\newblock Graphical and uniform consistency of estimated optimal transport
  plans.
\newblock \emph{arXiv preprint arXiv:2208.02508}, 2022.

\bibitem[Vacher and Vialard(2022)]{vacher2021convex}
A.~Vacher and F.-X. Vialard.
\newblock Convex transport potential selection with semi-dual criterion.
\newblock \emph{Advances in Neural Information Processing Systems 36}, 2022.

\bibitem[Vacher et~al.(2021)Vacher, Muzellec, Rudi, Bach, and
  Vialard]{vacher2021dimension}
A.~Vacher, B.~Muzellec, A.~Rudi, F.~Bach, and F.-X. Vialard.
\newblock A dimension-free computational upper-bound for smooth optimal
  transport estimation.
\newblock In \emph{Conference on Learning Theory}, pages 4143--4173. PMLR,
  2021.

\bibitem[van~de Geer(2000)]{vandegeer2000}
S.~van~de Geer.
\newblock \emph{Empirical Processes in M-Estimation}, volume~3 of
  \emph{Cambridge Series in Statistical and Probabilistic Mathematics}.
\newblock Cambridge University Press, 2000.

\bibitem[{van der Vaart} and Wellner(1996)]{vandervaart1996}
A.~W. {van der Vaart} and J.~A. Wellner.
\newblock \emph{Weak Convergence and Empirical Processes}.
\newblock Springer {{Series}} in {{Statistics}}. {Springer-Verlag}, {New York},
  1996.

\bibitem[Villani(2003)]{villani2003}
C.~Villani.
\newblock \emph{Topics in {{Optimal Transportation}}}.
\newblock {American Mathematical Soc.}, 2003.

\bibitem[Villani(2008)]{villani2008}
C.~Villani.
\newblock \emph{Optimal {{Transport}}: {{Old}} and {{New}}}, volume 338.
\newblock {Springer Science \& Business Media}, 2008.

\bibitem[Wang et~al.(2013)Wang, Slep{\v{c}}ev, Basu, Ozolek, and
  Rohde]{wang2013linear}
W.~Wang, D.~Slep{\v{c}}ev, S.~Basu, J.~A. Ozolek, and G.~K. Rohde.
\newblock A linear optimal transportation framework for quantifying and
  visualizing variations in sets of images.
\newblock \emph{International Journal of Computer Vision}, 101:\penalty0
  254--269, 2013.

\bibitem[Weed and Bach(2019)]{weed2019}
J.~Weed and F.~Bach.
\newblock Sharp asymptotic and finite-sample rates of convergence of empirical
  measures in {{Wasserstein}} distance.
\newblock \emph{Bernoulli}, 25:\penalty0 2620--2648, 2019.

\end{thebibliography}

\appendix

\section{The Semi-Dual Functional and Brenier Potentials}
\label{app:dual_props}

In this section, we recollect some well-known properties of the semi-dual functional, its optimizers, and some properties of convex conjugates that we use in our proofs.
Recall that for a given pair of distributions $P, Q \in \probspace$ we define a Brenier potential $\varphi_{P,Q}$ as any solution to the optimization problem:
\begin{align*}
        \varphi_{P,Q} \in \argmin_{\varphi \in L^1(P)} \int \varphi dP + \int \varphi^*dQ.
\end{align*}
If we denote by $\pi_{P,Q}$ an optimal coupling of $P$ and $Q$, then we have that
(see for instance Theorem 5.10 in \citet{villani2008}):
\begin{align*}
    \varphi_{P,Q}(X) + \varphi^*_{P,Q}(Y) = \inprod{X}{Y},
\end{align*}
for $\pi_{P,Q}$ almost every $(X,Y).$ Using this it is straightforward to see that,
\begin{align}
\label{eqn:semi-dual-opt}
    \mathcal{S}_{P,Q}(\varphi_{P,Q}) = \mathbb{E}_{(X,Y) \sim \pi_{P,Q}} \inprod{X}{Y}.
\end{align}
We also recall some well-known facts about convex conjugates (see~\cite{hiriart-urruty2004}), and derive some implications of the regularity assumption~\ref{ass:brenier_regularity}. 
For a convex function $\varphi$ defined on $\Omega$, we always have the Fenchel-Young inequality for any $x ,y\in \Omega$:
\begin{align*}
    \varphi(x) + \varphi^*(y) \geq \inprod{x}{y}.
\end{align*}
For a convex, differentiable function $\varphi$, the Fenchel-Young inequality holds with equality when $y = \nabla \varphi(x)$, i.e.:
\begin{align*}
    \varphi(x) + \varphi^*(\nabla \varphi(x)) = \inprod{x}{\nabla \varphi(x)}.
\end{align*}
If $\varphi$ is a strictly convex, differentiable function, then $\varphi^*$ is also convex and differentiable. In this case, we also have the identities:
\begin{align*}
\nabla\varphi^* \circ \nabla \varphi(x) = x, \quad \nabla\varphi\circ\nabla\varphi^*(y) = y.
\end{align*}

\section{Proof of Lemma~\ref{lem:bregman}}
\label{app:bregmanproof}

Recall that $\widetilde{\pi}$ denotes the optimal coupling between $\widetilde{P}$ and $\widetilde{Q}$.
We have that,
\begin{align*}
\mathcal{S}_{\widetilde{P}, \widetilde{Q}}(\varphi_0) - \mathcal{S}_{\widetilde{P}, \widetilde{Q}}(\widetilde \varphi) &= 
\mathbb{E}_{(X,Y) \sim \widetilde \pi} \left[  \varphi_0(X) + \varphi^*_0(Y) - \widetilde \varphi(X) - \widetilde \varphi^*(Y) \right].
\end{align*}
Now, we recall that:
\begin{align*}
\widetilde \varphi(X) + 
\widetilde \varphi^*(Y) 
= \inprod{X}{Y},~~~\widetilde \pi~\text{a.e.}
\end{align*}
By Assumption~\ref{ass:brenier_regularity} on $\varphi_0$, and using the Fenchel-Young (in)equality, we have that:
\begin{align*}
    \varphi_0(X) + \varphi_0^*(\nabla \varphi_0(X)) = \inprod{X}{\nabla\varphi_0(X)}.
\end{align*}
Putting these together, we obtain that:
\begin{align*}
\mathcal{S}_{\widetilde{P}, \widetilde{Q}}(\varphi_0) - \mathcal{S}_{\widetilde{P}, \widetilde{Q}}(\widetilde \varphi) &= \mathbb{E}_{(X,Y) \sim \widetilde \pi} \left[ \varphi^*_0(Y) - 
\varphi^*_0(T_0(X)) - \inprod{X}{Y - T_0(X)}\right],
\end{align*}
as desired.\qed 

\section{Proof of Theorem~\ref{thm:stability}}
\label{app:stabilityproof}

As discussed in the main text, our proof proceeds by upper and lower bounding the semi-dual difference $\mathcal{S}_{\widehat{P},\widehat{Q}}(\varphi_0) - \calS_{\widehat{P},\widehat{Q}}(\varphi_{\widehat{P},\widehat{Q}})$. The lower bound on this difference follows from the argument in the main text, and we provide a complete proof of the upper bound in this Appendix.

In the remainder of the proof, for any pair of distributions $P,Q$ we let $\pi_{P,Q}$ denote the optimal coupling between $P$ and $Q$. 
Then let us define the random variables
\begin{alignat*}{2}
X \sim P, \quad Y\sim Q,\quad 
U_1,U_2,U_3 \sim \hat P, \quad V_1,V_2,V_3 \sim \hat Q,    
\end{alignat*}
with the following joint distributions:
\begin{alignat*}{2}
(U_1,Y) &\sim \pi_{\hat P, Q}, \quad 
(X,V_1)&&\sim \pi_{P,\hat Q} \\
(X,U_2) &\sim \pi_{P,\hat  P},\quad 
(Y,V_2) &&\sim \pi_{Q,\hat Q} \\
(X,Y) &\sim \pi_{P,Q},\quad 
(U_3,V_3)&&\sim \pi_{\hat P,\hat Q}. 
\end{alignat*}
These joint distributions are summarized in the following figure:
\begin{center}
\resizebox{0.4\textwidth}{!}{%
    \begin{tikzpicture}[
        node style/.style={draw, circle},  
        label style/.style={draw=none, fill=none} 
    ]
        \node[node style] (X) at (0, 6) {\(X\)};
        \node[node style] (Y) at (3, 6) {\(Y\)};
        \node[node style] (U1) at (0, 3) {\(U_1\)};
        \node[node style] (V1) at (3, 3) {\(V_1\)};
        \node[node style] (U2) at (0, 1.5) {\(U_2\)};
        \node[node style] (U3) at (0, 0) {\(U_3\)};
        \node[node style] (V2) at (3, 1.5) {\(V_2\)};
        \node[node style] (V3) at (3, 0) {\(V_3\)};
        
        \draw[-] (X) -- (Y) node[midway, above, label style] {\(\pi_{P,Q}\)};
        \draw[-] (U1) -- (Y) node[midway, above left, label style, xshift=-11pt, yshift=-20pt] {\(\pi_{\hat P,Q}\)};
        \draw[-] (X) -- (V1) node[midway, above right, label style, xshift=10pt, yshift=-20pt] {\(\pi_{P,\hat{Q}}\)};
        \draw[-] (U3) -- (V3) node[midway, above, label style] {\(\pi_{\hat{P},\hat{Q}}\)};
        
        \draw[-, out=220, in=220, looseness=1.5] (X) to node[midway, left, label style] {\(\pi_{P,\hat{P}}\)} (U2);
        \draw[-, out=320, in=320, looseness=1.5] (Y) to node[midway, right, label style] {\(\pi_{Q,\hat Q}\)} (V2);
    \end{tikzpicture} }
\end{center}

To begin we derive a technical result, giving one-sample stability bounds for the semi-dual functional:
\begin{lemma}
\label{lem:onesample}
Let $P, Q \in \probspace$, and assume~\ref{ass:brenier_regularity}, \ref{ass:strong-convexity} and~\ref{ass:smoothness} hold for some $\alpha, \beta > 0$. For any $\widehat{P}, \widehat{Q} \in \probspace$, 
\begin{align}
\mathcal{S}_{P,\widehat{Q}}(\varphi_0) - \mathcal{S}_{P,\widehat{Q}}(\varphi_{P,\widehat{Q}}) &\leq \frac{1}{2\alpha} W_2^2(\widehat{Q},Q) - \mathbb{E} \inprod{X}{V_1-V_2} \\
\mathcal{S}_{\widehat{P},Q}(\varphi_0) - \mathcal{S}_{\widehat{P},Q}(\varphi_{\widehat{P},Q}) &\leq \frac{\beta}{2} W_2^2(\widehat{P},P) - \mathbb{E} \inprod{Y}{U_1-U_2}.
\end{align}
\end{lemma}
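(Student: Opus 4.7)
The plan is to prove each bound by combining Lemma~\ref{lem:bregman} with the ``swap'' trick illustrated in the main text, then optimally coupling to convert the resulting quadratic term into a squared Wasserstein distance.

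For the first inequality, I would apply Lemma~\ref{lem:bregman} to the OT problem between $P$ and $\widehat{Q}$ with coupling $\pi_{P,\widehat{Q}}$. Since $\nabla\varphi_0^*(T_0(X)) = X$ under~\ref{ass:brenier_regularity}, this yields
\begin{align*}
\mathcal{S}_{P,\widehat{Q}}(\varphi_0) - \mathcal{S}_{P,\widehat{Q}}(\varphi_{P,\widehat{Q}}) = \mathbb{E}\left[\varphi_0^*(V_1) - \varphi_0^*(T_0(X)) - \inprod{X}{V_1 - T_0(X)}\right].
\end{align*}
Because $V_2$ shares the marginal $\widehat{Q}$ with $V_1$, the identity $\mathbb{E}\varphi_0^*(V_1) = \mathbb{E}\varphi_0^*(V_2)$ lets me rewrite this as
\begin{align*}
\mathbb{E}\left[\varphi_0^*(V_2) - \varphi_0^*(T_0(X)) - \inprod{X}{V_2 - T_0(X)}\right] - \mathbb{E}\inprod{X}{V_1 - V_2}.
\end{align*}
Since $\varphi_0$ is $\alpha$-strongly convex, its conjugate $\varphi_0^*$ is $(1/\alpha)$-smooth, so the first expectation is bounded by $(1/2\alpha)\mathbb{E}\|V_2 - T_0(X)\|_2^2$. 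As $T_0(X) \sim Q$ and $V_2 \sim \widehat{Q}$, arranging the joint law so that $(T_0(X), V_2) \sim \pi_{Q,\widehat{Q}}$ identifies this expectation with $W_2^2(\widehat{Q}, Q)$, giving the first bound.

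For the second inequality, the semi-dual optimality relation $\mathcal{S}_{\widehat{P},Q}(\varphi_{\widehat{P},Q}) = \mathbb{E}\inprod{U_1}{Y}$ (see Appendix~\ref{app:dual_props}) lets me write
\begin{align*}
\mathcal{S}_{\widehat{P},Q}(\varphi_0) - \mathcal{S}_{\widehat{P},Q}(\varphi_{\widehat{P},Q}) = \mathbb{E}\left[\varphi_0(U_1) + \varphi_0^*(Y) - \inprod{U_1}{Y}\right].
\end{align*}
Applying Fenchel-Young at $S_0(Y) := \nabla\varphi_0^*(Y)$, for which $\varphi_0(S_0(Y)) + \varphi_0^*(Y) = \inprod{S_0(Y)}{Y}$ and $\nabla\varphi_0(S_0(Y)) = Y$, the integrand becomes the $\varphi_0$-Bregman divergence
\begin{align*}
\varphi_0(U_1) - \varphi_0(S_0(Y)) - \inprod{Y}{U_1 - S_0(Y)}.
\end{align*}
Replacing $U_1$ by $U_2 \sim \widehat{P}$ via $\mathbb{E}\varphi_0(U_1) = \mathbb{E}\varphi_0(U_2)$ produces the correction term $-\mathbb{E}\inprod{Y}{U_1 - U_2}$ and leaves a residual that, by $\beta$-smoothness of $\varphi_0$, is at most $(\beta/2)\mathbb{E}\|U_2 - S_0(Y)\|_2^2$. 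Since $S_0$ pushes $Q$ forward to $P$, the variable $S_0(Y)$ has marginal $P$, and choosing the joint law so that $(S_0(Y), U_2) \sim \pi_{P,\widehat{P}}$ identifies this quadratic with $W_2^2(\widehat{P}, P)$.

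The only delicate point is the joint construction: in each case one must build a single probability space that simultaneously supports the original coupling ($\pi_{P,\widehat{Q}}$ or $\pi_{\widehat{P},Q}$) and the auxiliary coupling used to extract the Wasserstein term ($\pi_{Q,\widehat{Q}}$ or $\pi_{P,\widehat{P}}$). This is a standard gluing argument — first sample from the original coupling, then draw the auxiliary variable conditionally on $T_0(X)$ or $S_0(Y)$ from the appropriate regular conditional — but it must be invoked explicitly to justify the freedom to choose the joint law used in the last step of each argument.
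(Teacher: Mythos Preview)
Your proposal is correct and follows essentially the same approach as the paper's proof: both arguments express the semi-dual difference as an integrated Bregman divergence of $\varphi_0^*$ (resp.\ $\varphi_0$), swap $V_1$ for $V_2$ (resp.\ $U_1$ for $U_2$) using equality of marginals, and then apply $(1/\alpha)$-smoothness of $\varphi_0^*$ (resp.\ $\beta$-smoothness of $\varphi_0$) together with the optimal coupling to recover the Wasserstein term. Your use of $T_0(X)$ and $S_0(Y)$ in place of the paper's $Y$ and $X$ (which coincide under $(X,Y)\sim\pi_{P,Q}$), and your explicit mention of the gluing construction, are cosmetic differences only.
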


\noindent With this result in place we return to our main proof. The following identity follows directly from the definition of the semi-dual functional:
\begin{align}
\label{eqn:f}
\semidualphatqhat(\varphi_0) - \semidualphatqhat(\varphi_{\widehat{P},\widehat{Q}}) = \semidualphatq(\varphi_0) + \semidualpqhat(\varphi_0) -
\semidualpq(\varphi_0) - \semidualphatqhat(\varphi_{\widehat{P},\widehat{Q}}).
\end{align}
Then we observe that by~\eqref{eqn:semi-dual-opt},
\begin{align*}
    \mathcal{S}_{\widehat{P},Q}(\varphi_{\widehat{P},Q}) &= \mathbb{E} \inprod{U_1}{Y} \\
    \mathcal{S}_{P,\widehat{Q}}(\varphi_{P,\widehat{Q}}) &= \mathbb{E} \inprod{X}{V_1} \\
    \mathcal{S}_{\widehat{P},\widehat{Q}}(\varphi_{\widehat{P},\widehat{Q}}) &= \mathbb{E} \inprod{U_3}{V_3} \\
    \mathcal{S}_{P,Q}(\varphi_0) &= \mathbb{E} \inprod{X}{Y}.
\end{align*}
From these facts, and~\eqref{eqn:f} we obtain,
\begin{align*}
\semidualphatqhat(\varphi_0) - \semidualphatqhat(\varphi_{\widehat{P},\widehat{Q}}) 
&=  \semidualphatq(\varphi_0) -  \semidualphatq(\varphi_{\widehat{P},Q}) + \semidualpqhat(\varphi_0) - \semidualpqhat(\varphi_{P,\widehat{Q}})  \\
&~~~~~~~~ +
\mathbb{E} \left[
  \inprod{U_1}{Y} + 
  \inprod{X}{V_1} -
  \inprod{X}{Y} - 
  \inprod{U_3}{V_3}
\right] \\
&\leq  \frac{1}{2\alpha}  W_2^2(\widehat{Q},Q)  + \frac{\beta}{2} W_2^2(\widehat{P},P) + \mathbb{E} \left[ \langle X, V_2-Y\rangle + \langle Y, U_2\rangle 
-\langle U_3,V_3\rangle \right],
\end{align*}
where the second inequality uses Lemma~\ref{lem:onesample}.
Now, by~\eqref{eqn:maxcorr}, we know that, $\mathbb{E} \inprod{U_3}{V_3} \geq \mathbb{E} \inprod{U_2}{V_2}$
and
so we obtain:
\begin{align*}
\semidualphatqhat(\varphi_0) - \semidualphatqhat(\varphi) &\leq \frac{1}{2\alpha}  W_2^2(\widehat{Q},Q)  + \frac{\beta}{2} W_2^2(\widehat{P},P) + \mathbb{E}\inprod{V_2-Y}{X-U_2}.
\end{align*}
Applying the Cauchy-Schwarz 
inequality to the final term and noting that $\mathbb{E} \|X-U_2\|_2^2 = W_2^2(\widehat{P},P)$ and $\mathbb{E} \|V_2-Y\|_2^2 = W_2^2(\widehat{Q},Q),$ we obtain the Theorem.\qed 

\subsection{Proof of Lemma~\ref{lem:onesample}}
The proofs of the two claims are similar, and we prove each in turn. For the first claim, notice that 
\begin{align*}
    \mathcal{S}_{P,\widehat{Q}}(\varphi_0) - \mathcal{S}_{P,\widehat Q}(\varphi_{P,\widehat Q}) &= \mathbb{E}  \left[ \varphi_0(X) + \varphi_0^*(V_1) - \varphi_{P,\widehat Q}(X) - \varphi_{ P,\widehat{Q}}^*(V_1) \right].
\end{align*}
Furthermore, it holds
almost surely that
\begin{align*}
    \varphi_{P,\widehat Q}(X) + \varphi_{P,\widehat Q}^*(V_1) 
     &= \inprod{X}{V_1},\quad 
    \varphi_0(X) + \varphi_0^*(Y)  = \inprod{X}{Y}.
\end{align*}
Now, since $V_2$ is equal in distribution to $V_1$, we have
\begin{align*}
    \mathcal{S}_{P,\widehat Q}(\varphi_0) - \mathcal{S}_{P,\widehat Q}(\varphi_{P,\widehat Q }) 
 &= \mathbb{E} \left[\varphi_0^*(V_1) - \varphi_0^*(Y) - \inprod{X}{ V_1-Y}\right] \\
 &=  \mathbb{E} \left[\varphi_0^*(V_2) - \varphi_0^*(Y) - \inprod{\nabla\varphi_0^*(Y)}{ V_2 - Y}\right] + \bbE \inprod{X}{V_2-V_1}.
\end{align*}
Using this fact together with the $1/\alpha$ smoothness of $\varphi_0$, we obtain the bound:
\begin{align*}
    \mathcal{S}_{P,\widehat Q}(\varphi_0) - \mathcal{S}_{P,\widehat Q}(\varphi_{P,\widehat Q}) &\leq
    \frac{1}{2\alpha } \mathbb{E} \|V_2 - Y\|_2^2 - \mathbb{E} \inprod{X}{V_1-V_2},
\end{align*}
which gives the first claim. 
To prove the second claim, 
note again that 
\begin{align*}
    \mathcal{S}_{\widehat{P},Q}(\varphi_0) - \mathcal{S}_{\widehat{P},Q}(\varphi_{\widehat{P},Q}) &= \mathbb{E}  \left[ \varphi_0(U_1) + \varphi_0^*(Y) - \varphi_{\hat P,Q}(U_1) - \varphi_{\hat P,Q}^*(Y) \right].
\end{align*}
Furthermore, it holds
almost surely that
\begin{align*}
    \varphi_{\hat P,Q}(U_1) + \varphi_{\hat P,Q}^*(Y) &= \inprod{U_1}{Y},\quad 
    \varphi_0(X) + \varphi_0^*(Y)  = \inprod{X}{Y}.
\end{align*}
Now, since $U_2$ is equal in distribution to $U_1$, we have
\begin{align*}
    \mathcal{S}_{\widehat{P},Q}(\varphi_0) - \mathcal{S}_{\widehat{P},Q}(\varphi_{\widehat{P},Q}) 
 &= \mathbb{E} \left[\varphi_0(U_1) - \varphi_0(X) - \inprod{Y}{U_1 - X}\right] \\
 &=  \mathbb{E} \left[\varphi_0(U_2) - \varphi_0(X) - \inprod{Y}{ U_2 - X}\right] - \bbE \inprod{Y}{U_1 - U_2}.
\end{align*}
Using this fact together with the smoothness of $\varphi_0$, we obtain the bound:
\begin{align*}
    \mathcal{S}_{\widehat{P},Q}(\varphi_0) - \mathcal{S}_{\widehat{P},Q}(\varphi_{\widehat{P},Q}) &\leq
    \frac{\beta}{2} \mathbb{E} \|U_2 - X\|_2^2 - \mathbb{E} \inprod{Y}{U_1-U_2},
\end{align*}
which gives the desired claim.\qed 

\section{Proof of Lemma~\ref{lem:nn}}
\label{app:nn}
For ease of notation we denote by $\widehat{T}$ the nearest neigbor OT map $\widehat{T}_{nm}^{\text{1NN}}$. We observe that we can decompose the $L^2(P)$ error of the nearest neighbor map as:
\begin{align*}
    \int \|\widehat{T} - T_0\|_2^2 dP &= \sum_{i=1}^n \int_{V_i} \|\widehat{T} - T_0\|_2^2 dP \\
    &\leq 2 \sum_{i=1}^n \int_{V_i} \|\widehat{T}(X_i) - T_0(X_i)\|_2^2 dP + 2 \sum_{i=1}^n \int_{V_i} \|T_0(x) - T_0(X_i)\|_2^2 dP(x) \\
    &\stackrel{\text{(i)}}{\leq} 2 \sum_{i=1}^n P(V_i) \|\widehat{T}(X_i) - T_0(X_i)\|_2^2 +2\beta^2 \sum_{i=1}^n \int_{V_i} \|x - X_i\|_2^2 dP(x) \\
    &\leq 2 \max_{i} P(V_i) \sum_{i=1}^n \|\widehat{T}(X_i) - T_0(X_i)\|_2^2 + 2 \beta^2 \mathbb{E}_{X \sim P} \left[\|X - \text{nn}(X)\|_2^2 | X_1,\ldots,X_n\right],
\end{align*}
as desired. The inequality (i) follows from~\ref{ass:smoothness}, noting that $T_0 = \nabla \varphi_0$, and that for a differentiable convex function, smoothness is equivalent to Lipschitzness of the gradient~\citep{hiriart-urruty2004}. 
\qed 

\section{Proof of Lemma~\ref{lem:moments}}
\label{app:momentslemma}
The first claim follows directly from past work, and we focus on the remaining claims. Similar results, albeit under much stronger boundedness assumptions, are classical in the analysis of $k$-NN regression (see, for instance, Chapter 6 of~\cite{gyorfi2006}).

\vspace{.1cm}

\noindent {\bf Proof of Claim~\eqref{eqn:claimone}: } Let $B_{x,r}$ denote the radius $r$ ball in $\mathbb{R}^d$ centered at $x$, and let $N(S,r)$ denote the $r$-covering number of the set $S\subseteq \bbR^d$ with respect to the Euclidean distance. We partition $\mathbb{R}^d$ into the sets:
\begin{align*}
S_0 = B_{0,1},\quad S_j = B_{0,2^j} \backslash B_{0,2^{j-1}},\quad j=1,2,\dots.
\end{align*}
Recalling that we denote by $\text{nn}(x)$ the nearest neighbor of $x$ in a sample $X_1,\ldots,X_n$, our goal is to bound:
\begin{align*}
\mathbb{E}[e_2] = \mathbb{E}_{X,X_1,\ldots,X_n \sim P} \|X - \nn(X)\|_2^2 = \int_{0}^{\infty}\bbP(\|X - \nn(X)\|_2 \geq \sqrt{t}) dt.
\end{align*}
For a fixed $x \in S_j$ we can write,
\begin{align*}
\bbP(\|x - \nn(x)\|_2 \geq \sqrt{t}) = (1 - P(B_{x,\sqrt{t}}))^n,
\end{align*}
so we obtain that,
\begin{align*}
\bbP(\|X - \nn(X)\|_2 \geq \sqrt{t}) &= \sum_{j} \int_{S_j}\bbP(\|x - \nn(x)\|_2 \geq \sqrt{t}) dP(x) \\
&= \sum_{j} \int_{S_j} (1 - P(B_{x,\sqrt{t}}))^n dP(x) \\
&\leq \sum_j \int_{S_j} \exp( - n P(B_{x,\sqrt{t}})) dP(x).
\end{align*}
Suppose we cover the set $S_j$ with radius $\sqrt{t}/2$ balls, and use the following implication
$$x \in B_{x^c,\sqrt{t}/2} \implies B_{x^c,\sqrt{t}/2} \subseteq B_{x, \sqrt{t}},$$
where $x^c$ is the center closest to $x$. Then,  
using the fact that 
$x \exp(-nx) \leq 1/n$, we get,
\begin{align*}
P(\|X - \nn(X)\|_2 \geq \sqrt{t}) &\leq \sum_j \min\{P(S_j), N(S_j, \sqrt{t}/2)/n\}.
\end{align*}
Now, using Markov's inequality
and the moment condition~\eqref{eqn:rthmoment},
we can upper bound the probability 
content of $S_j$ as follows:
\begin{align*}
    P(S_j) \leq P(\|x\|_2 \geq 2^{j-1}) \lesssim 2^{-r(j-1)}.
    \end{align*}
Furthermore,  noting that $N(S_j, \sqrt{t}) \lesssim (2^j/\sqrt{t})^d$,
we obtain
\begin{align*}
\mathbb{E} \|X - \nn(X)\|_2^2 &\lesssim \int_0^\infty \sum_j \min\Big\{2^{-r(j-1)}, \frac{(2^j/\sqrt{t})^d}{n}\Big\} dt \\
&\lesssim \sum_j \left[ 2^{-rj} \frac{2^{2j(d + r)/d}}{n^{2/d}} \right] \lesssim n^{-2/d},
\end{align*}
provided that $r \geq 2d/(d - 2)$.

\vspace{.1cm}

\noindent {\bf Proof of Claim~\eqref{eqn:claimtwo}: }
First, we note that by a standard uniform convergence argument (see, for instance, Lemma 7 in~\cite{chaudhuri2010} or~\cite{balakrishnan2013}), with probability at least $1 - \delta$,
any ball $B$ with $P(B) \gtrsim \log(n/\delta)/n$ must have at least one sample point in it. We condition on this event throughout the remainder of the proof.

We next use Lemma 6.2 from~\cite{gyorfi2006}, which shows the following. Suppose, for any $x$ and $a \geq 0$ we denote by $S_a(x)$ the set
\begin{align*}
S_a(x) := \{y: P(B_{y,\|x - y\|_2}) \leq a\}.
\end{align*}
Then, since $P$ is non-atomic, 
it holds for every $x \in \mathbb{R}^d$ that, 
\begin{align}
\label{eqn:gyorfi}
    P(S_a(x))
\lesssim a.
\end{align}
For any sample point $x \in \{X_1,\ldots,X_n\}$ let us denote by $V_x$ the Voronoi cell containing $x$. 
If $y \in V_x$, then $B_{y,\|x-y\|_2}$ must not contain any sample point other than $x$. This in turn means that $y \in S_a(x)$ for $a \asymp \log(n/\delta)/n$. Consequently, using the fact that $P$ is non-atomic, we obtain from~\eqref{eqn:gyorfi} that:
\begin{align*}
    P(V_x) \lesssim \frac{\log(n/\delta)}{n}.
\end{align*}
Thus, letting $Z := \max_i P(V_i)$, we have shown that with probability at least $1 - \delta$, 
\begin{align*}
    Z \lesssim \frac{\log(n/\delta)}{n}.
\end{align*}
From this we obtain that 
\begin{align*}
    \mathbb{E}[Z^2] \lesssim \delta + \left[\frac{\log(n/\delta)}{n}\right]^2 \lesssim \left[\frac{\log n}{n}\right]^2,
\end{align*}
by choosing $\delta$ appropriately.
\qed 
\end{document}